\theoremstyle{plain}
\newtheorem{theorem}{Theorem}[section]
\newtheorem{corollary}[theorem]{Corollary}
\theoremstyle{definition}
\theoremstyle{remark}
\newtheorem{remark}{Remark}[section]
\numberwithin{equation}{section}
\numberwithin{table}{section}
\numberwithin{figure}{section}
\begin{document}

\title[Basic trigonometric power sums with applications]{Basic trigonometric power sums with applications}


\author{Carlos M. da Fonseca}
\address{Department of Mathematics, Kuwait University, Safat 13060, Kuwait}
\email{carlos@sci.kuniv.edu.kw}

\author{M. Lawrence Glasser}
\address{Department of Physics, Clarkson University, Potsdam, NY 13699-5820, USA}
\email{laryg@clarkson.edu}

\author{Victor Kowalenko}
\address{ARC Centre of Excellence for Mathematics and Statistics of Complex Systems, Department of Mathematics and Statistics,
The University of Melbourne, Victoria 3010, Australia}
\email{vkowa@unimelb.edu.au}

\subjclass[2000]{33B10, 05A15, 11B65}


\keywords{basic trigonometric power sum, binomial coefficient, closed walk,
cosine, cycle, generating function, graph, path, sine}

\begin{abstract}
We present the transformation of several sums of positive integer
powers of the sine and cosine into non-trigonometric combinatorial
forms. The results are applied to the derivation of generating
functions and to the number of the closed walks on a path and in a
cycle.
\end{abstract}

\maketitle

\section{Introduction} \label{sec1}

Over the last half-century there has been widespread interest in
finite sums involving powers of trigonometric functions. In $1966$
Quoniam posed an open problem in which the following result was
conjectured
\begin{equation}\label{eq1}
    \sum_{k=1}^{\lfloor n/2\rfloor} 2^{2 m} \cos^{2m} \left( \frac{k\pi}{n+1}\right)=
    (n+1)\binom{2m-1}{m-1}-2^{2m-1}\, ,
\end{equation}
where $m$ is a positive integer and subject to $m<n+1$ \cite{Q1966}.
In this equation $\lfloor n/2 \rfloor$ denotes the floor function of
$n/2$ or the greatest integer less than or equal to $n/2$. A
solution to the above problem was presented shortly after by
Greening et al. in \cite{G1968}. Soon afterwards, there appeared a
problem involving powers of the secant proposed by Gardner
\cite{Ga1969}, which was solved partially by Fisher
\cite{Fi1971,Kl1990} and completely, only recently, in
\cite{dF2015,K2011}. The activity subsided somewhat until such
series arose in string theory in the early 1990's with the work of
Dowker \cite{Do1992}. Subsequently, a surge occurred with  studies
of related sums and identities as evidenced by the work of: (1)
Berndt and Yeap \cite{BY2002} on reciprocity theorems and (2)
Cvijovi\'c and Srivastava \cite{Cv2012,Cv2007} on Dowker and related
sums, while \cite{C2003,CM1999,WZ2007} were motivated by the
intrinsic fascination these sums possess and derived formulas where
the summand was a power of the secant, e.g.,
$$
\sum_{k=0}^{n-1} \sec^{2p} \left( \frac{k\pi}{n}\right)=
n\sum_{k=1}^{2p-1}(-1)^{p+k}\binom{p-1+kn}{2p-1} \sum_{j=k}^{2p-1}\binom{2p}{j+1}\, .
$$

By far, the most interesting sums have been  those trigonometric power sums with
inverse powers of the sine or cosine, since their evaluation invariably involves the zeta function
directly or through related numbers such as the Bernoulli and Euler numbers. A typical example
is the finite sum of powers of the contangent studied by Berndt and Yeap \cite{BY2002},

\begin{equation} \label{eq1a}
\frac{1}{k} \, \sum_{r=1}^{k-1} \cot^{2n} \left( \frac{r\pi}{k}\right)= (-1)^n-(-1)^n 2^{2n}\sum_{\substack{
j_0,j_1,\ldots,j_{2n} > 0 \\j_0+j_1+\cdots+j_{2n}=n}}
k^{2j_0-1}\prod_{r=0}^{2n}\frac{B_{2j_r}}{(2j_r)!}\, .
\end{equation}
Here, $B_j$ denotes the Bernoulli number with index $j\geqslant 0$.
As described in Appendix A, Berndt and Yeap use contour integration
to derive this result, although more recently it has been studied
with the aid of sampling theorems \cite{AA2011}. Unfortunately, \eqref{eq1a}
is, if not incorrect, confusing or misleading because it states that
the $j_i$ cannot equal zero. Yet, some of them are required to be zero
in order to evaluate the polynomials on the rhs. In addition, it does
not matter if any of the $j_i$ are zero since $B_0$ is equal to
unity anyway. A more precise statement of \eqref{eq1a} is
\begin{equation} \label{eq1b}
\frac{1}{k} \, \sum_{r=1}^{k-1} \cot^{2n} \left(
\frac{r\pi}{k}\right)= (-1)^n -(-1)^n 2^{2n}
\sum_{j_1,j_2,\ldots,j_{2n} =0}^{n,n-j_1,\ldots, n-j_s+j_{2n}}k^{2j_{2n}-1}
\prod_{r=1}^{2n} \frac{B_{2j_{r}}}{(2j_r)!}\,
\frac{B_{2(n-j_s)}}{(2(n-j_s))!} \;\;,
\end{equation}
where $j_s = \sum_{i=1}^{2n}j_i$. The main difference between the
two equations is that there is now an upper limit for each sum over
the $j_i$, which is dependent on the number of sums preceding it. As
a consequence, there is no requirement for the $j_0$ index to appear
as a sum as in \eqref{eq1a}. Instead, it has been replaced by
$n-j_s$ in \eqref{eq1b}. Although the above material is incidental
to the material presented in this paper, because of its importance,
the derivation of \eqref{eq1b} is presented in Appendix A together
with a description of how it is to be implemented when determining
specific powers of the sum, which is another issue overlooked in
\cite{BY2002}. In doing so, we give the $n=3$ and $n=4$ forms for
the sum, thereby demonstrating to the reader just how intricate the
evaluation of trigonometric power sums can be.

It should also be mentioned that although the arguments inside the trigonometric power of the finite sums discussed above are
composed of rational numbers multiplied by $\pi$, the actual sequence of numbers has a profound effect on the final value for the
trigonometric power sum. For example, by using a recursive approach, Byrne and Smith \cite{BS1997} have derived the following result
for the same finite sum over powers of cotangent:
$$
\sum_{r=1}^k \cot^{2n}\left( \frac{(r-1/2) \pi}{2 k}\right)= (-1)^k
k+ \sum_{j=1}^n b_{n,j}\, k^{2j}\, ,
$$
where the coefficients are given by
$$
b_{n,j}=\frac{1}{2^{2(n-j)-1}}\sum_{\ell=1}^{n-j} (-1)^\ell
\binom{2n}{\ell} b_{n-\ell,j} \quad \mbox{and}\quad \sum_{j=1}^n
b_{n,j}=1+(-1)^{n-1}\, ,\quad \mbox {for $j<n$.}
$$
Moreover, Byrne and Smith express the $b_{n,j}$ in terms of the
odd-indexed Euler numbers as opposed to the Bernoulli numbers
appearing in \eqref{eq1a} and \eqref{eq1b}. Hence we see that the
sum yields completely different results when the argument inside the
cotangent is altered to $(r - 1/2)\pi/2k$ as opposed to $r \pi/k$ in
\eqref{eq1a} and \eqref{eq1b}.

Although there has been a greater interest in sums with inverse powers of sine or
cosine (including powers of contangent and tangent), there are still many
basic trigonometric power sums that have not been solved. By a basic
trigonometric power sum, we mean a finite sum involving positive powers
of a cosine or sine whose arguments are rational multiples of $\pi$. Such series can
be as difficult to evaluate as their ``inverse power" counterparts even though they
tend to yield combinatorial solutions directly rather than involve the Riemann zeta function
or related quantities (Bernoulli numbers), as in \eqref{eq1a} or \eqref{eq1b},
before ultimately reducing to the simple polynomial solutions  in Appendix A.
As we shall see, just like their inverse power counterparts, the closed form expressions
for basic trigonometric power sums depend greatly, not only on the power of the
trigonometric function, but also on their limits and the values of the rationals
multiplying $\pi$ in the argument.

In response to the situation concerning basic trigonometric power sums, Merca \cite{M2012}
recently derived formulas for various basic cosine power sums including
\begin{equation} \label{merca1}
\sum_{k=1}^{\lfloor (n-1)/2\rfloor} \cos^{2p} \left( \frac{k\pi}{n}\right)=
-\frac 12 +\frac{n}{2^{2p+1}}\sum_{k=-\lfloor p/n\rfloor}^{\lfloor p/n\rfloor}\binom{2p}{p+kn}
\end{equation}
and
\begin{equation} \label{merca2}
\sum_{k=1}^{\lfloor n/2\rfloor} \cos^{2p} \left(\frac{(k-1/2) \pi}{n}\right)=\frac{n}{2^{2p+1}}
\sum_{k=-\lfloor p/n\rfloor}^{\lfloor p/n\rfloor}(-1)^k\binom{2p}{p+kn}
\;,
\end{equation}
where $n$ and $p$ represent positive integers. By using these results he was able to derive
in a series of corollaries several new combinatorial identities involving finite sums over
$k$ of the binomial coefficient $\binom{2m}{n-rk}$, where $r$ is an integer. Following this
work, two of us \cite{FK2013} derived a formula for the basic trigonometric power sum with
the alternating phase factor $(-1)^k$ in the summand and an extra factor of 2 in the denominator
of the argument. The formula was also found to be combinatorial in nature, but its actual form
was different when both the power of the sine or cosine and its upper limit were varied. Consequently,
a computer program was required to evaluate the series in rational form for each set of these values.

In a more recent work using Dickson and Chebyshev polynomials Barbero \cite{B2014} initially obtained
the following result:
\begin{equation} \label{barb1}
R_{m,n} =2^{2m} \sum_{k=1}^{n+1} \cos^{2m} \Bigl( \frac{k \pi}{2n+3}
\Bigr)= \Bigl( n+ \frac{3}{2} \Bigr) \binom{2m}{m}- 2^{2m-1}\;,
\quad m \geqslant 1\,,
\end{equation}
with $R_{n,0}=n+1$ and $R_{0,m}=1$. Though elegant, this result was found not to be entirely correct.
E.g., for $m=12$ and $n=3$, the representation in terms of the trigonometric power sum for $R_{m,n}$
gives a value of $3\,798\,310$, while its combinatorial form on the rhs gives a value of $3\,780\,094$. Yet,
if one replaces $n$ and $p$, respectively, by $2n+3$ and $m$ in \eqref{merca1}, then after multiplying
by $2^{2m}$ one finds that the combinatorial form on the rhs yields the correct value of $3\,798\,310$.
Apparently, the discrepancy between the lhs and rhs of \eqref{barb1} has been brought to Barbero's attention,
since he has amended the original result to
\begin{equation} \label{barb2}
2^{2m} \sum_{k=1}^{n+1} \cos^{2m}\Bigl( \frac{k \pi}{2n+3}\Bigr)
=  \begin{cases} \displaystyle \Bigl( n+ \frac{3}{2} \Bigr) \binom{2m}{m}- 2^{2m-1}\;, \quad 1 \leqslant m < (2n+3)\,, \cr
\displaystyle \Bigl(( n+ \frac{3}{2} \Bigr) \binom{2m}{m}- 2^{2m-1} \cr
\displaystyle + (2n+3) \sum_{i=1}^{\lfloor m/(2n+3) \rfloor} \binom{2m}{m-(2n+3)i}, \quad m \geq 2n+3\;\;, \end{cases}
\end{equation}
with $R_{n,0}$ and $R_{0,m}$ as given above. Consequently, we find
that the extra sum on the rhs of the second result in \eqref{barb2}
yields the discrepancy of 18216 on the rhs of \eqref{barb1} when
$m=12$ and $n=3$. This highlights the necessity for conducting
numerical checks on results rather than solely relying on proofs,
where small terms such as the extra sum in the second result of
\eqref{barb2} can often be neglected.

In this paper we aim to continue with the derivation of combinatorial forms for basic trigonometric
power sums possessing different arguments and limits than those calculated previously. Typically,
the basic trigonometric power sums studied here will be of the form:
$$
S= \sum_{k=0}^{g(n)}  (\pm 1)^k \, f(k)  \left\{ \begin{matrix} \cos^{2 m} \\ \sin^{2m}
\end{matrix} \right\} \left( \frac{qk \pi}{n} \right)  \;\;,
$$
where $m$, $q$ and $n$ are positive integers, $g(n)$ depends upon
$n$, e.g., $n - 1$ or $\lfloor m/n\rfloor$, and $f(k)$ is a simple
function of $k$, e.g.,  unity or $\cos(k \pi/p)$ with $p$, an
integer. Surprisingly, the results presented here will be required
when we study more complicated sums with inverse powers of
trigonometric functions, such as the general or twisted Dowker
\cite{Do1992} and related sums \cite{Cv2012}, in the future.
Furthermore, we shall apply the results of Section \ref{sec2} in the
derivation of generating functions and finally consider an
application to spectral graph theory by determining the number of
closed walks of a specific length on a path and in a cycle.

\section{Main result} \label{sec2}
The main result in this paper is presented in the following theorem:
\begin{theorem} \label{main}
Let $m$ and $n$ be positive integers in the basic trigonometric
power sums
$$
C(m,n):= \sum_{k=0}^{n-1} \cos^{2m}  \left( \frac{k\pi}{n}\right)
\quad \mbox{and} \quad S(m,n):= \sum_{k=0}^{n-1} \sin^{2m}  \left(
\frac{k\pi}{n}\right)\, .
$$
Then it can be shown that
\begin{equation}\label{eq2a}
C(m,n) =
\begin{cases} \displaystyle
2^{1-2m} \, n \Bigl(\binom{2m-1}{m-1} +\sum_{p=1}^{\lfloor
m/n\rfloor}\binom{2m}{m-pn} \Bigr) \, , & m\geqslant n\,, \cr
\displaystyle 2^{1-2m} \, n \binom{2m-1}{m-1}\;, & m<n\,,
\end{cases}
\end{equation}
and
\begin{equation}\label{eq2b}
S(m,n)=
\begin{cases} \displaystyle
 2^{1-2m} \,n
\Bigl(\binom{2m-1}{m-1}+\sum_{p=1}^{\lfloor
m/n\rfloor}(-1)^{pn}\binom{2m}{m-pn} \Bigr)\,, &  m \geqslant n \,,
\cr \displaystyle 2^{1-2m} \,n \binom{2m-1}{m-1} \,, & m<n.
\end{cases}
\end{equation}
\end{theorem}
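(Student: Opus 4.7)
The plan is to expand the even powers of cosine and sine by Euler's formula and interchange the order of summation, letting the inner sum be a geometric series over roots of unity that filters out most binomial terms.

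First I would write
\[
\cos^{2m}\!\left(\tfrac{k\pi}{n}\right)=\frac{1}{2^{2m}}\left(e^{ik\pi/n}+e^{-ik\pi/n}\right)^{2m}=\frac{1}{2^{2m}}\sum_{j=0}^{2m}\binom{2m}{j}e^{i(2m-2j)k\pi/n},
\]
and similarly for $\sin^{2m}$ using $(2i)^{2m}=(-1)^m 2^{2m}$, introducing an extra $(-1)^j$ inside the binomial expansion. Substituting into $C(m,n)$ and interchanging the two sums gives
\[
C(m,n)=\frac{1}{2^{2m}}\sum_{j=0}^{2m}\binom{2m}{j}\sum_{k=0}^{n-1}e^{i(2m-2j)k\pi/n}.
\]

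Next I would analyse the inner geometric sum. Its common ratio is $\omega_j=e^{i 2(m-j)\pi/n}$, which is an $n$th root of unity because $\omega_j^n = e^{i 2(m-j)\pi} = 1$. Hence the inner sum equals $n$ when $\omega_j=1$, i.e.\ when $n\mid (m-j)$, and $0$ otherwise. The surviving indices are therefore $j=m-pn$ with $p\in\mathbb{Z}$ satisfying $0\leqslant m-pn\leqslant 2m$, equivalently $-\lfloor m/n\rfloor\leqslant p\leqslant \lfloor m/n\rfloor$. This yields
\[
C(m,n)=\frac{n}{2^{2m}}\sum_{p=-\lfloor m/n\rfloor}^{\lfloor m/n\rfloor}\binom{2m}{m-pn},
\]
and an analogous expression for $S(m,n)$ with an extra sign $(-1)^{m-pn}$; after pulling out $(-1)^m$ and using $(-1)^m\cdot(-1)^m=1$, only $(-1)^{pn}$ survives.

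Finally I would collapse the two-sided sum over $p$ into a one-sided one using the symmetry $\binom{2m}{m-pn}=\binom{2m}{m+pn}$, separate off the central term $p=0$, and rewrite $\binom{2m}{m}=2\binom{2m-1}{m-1}$ to match the stated form. Splitting into the cases $m<n$ (empty $p$-sum) and $m\geqslant n$ then produces the two branches of \eqref{eq2a} and \eqref{eq2b}.

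The step that needs the most care is really just bookkeeping: correctly identifying the range of $p$ that makes $j=m-pn$ land in $[0,2m]$ and tracking the sign factors $(-1)^{m-pn}$ for the sine sum so as to conclude that only $(-1)^{pn}$ appears. No substantial analytic obstacle is involved; the whole argument is a roots-of-unity filter on a binomial expansion, combined with the standard identity relating $\binom{2m}{m}$ and $\binom{2m-1}{m-1}$.
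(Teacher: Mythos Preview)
Your proof is correct and takes a genuinely different route from the paper's. The authors start from the closed identities (No.~4.4.2.1 in Prudnikov--Brychkov--Marichev)
\[
\sum_{k=1}^{n}\cos^{2m}(kx)=2^{1-2m}\sum_{k=1}^{m}\binom{2m}{m-k}\,\frac{\sin(nkx)}{\sin(kx)}\cos\bigl((n+1)kx\bigr)+2^{-2m}n\binom{2m}{m},
\]
and the companion sine formula, set $x=\pi/n$, and then evaluate the quotient $R(k)=\lim_{y\to\pi}\sin(ky)/\sin(ky/n)$, showing it vanishes unless $k$ is a multiple of $n$, in which case it equals $(-1)^{(n-1)p}n$; the phase factors then collapse as in your argument. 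Your approach replaces this external identity and limit step by the direct Euler-formula expansion and roots-of-unity filter, which is more self-contained and arguably cleaner: the vanishing of the inner geometric sum is exact, with no limiting process needed. The paper's route, on the other hand, is chosen so that the same computation with $x=q\pi/n$ immediately yields Corollary~2.3 (co-prime $q$ and $n$) with essentially no extra work---though your method adapts to that case just as easily, since the inner sum becomes $\sum_{k=0}^{n-1}e^{i2q(m-j)k\pi/n}$ and the filter condition $n\mid q(m-j)$ reduces to $n\mid(m-j)$ when $\gcd(q,n)=1$.
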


\begin{remark}
After preparing the manuscript, it came to our attention that
\eqref{eq2a} appears in a more unwieldy form as (18.1.5) in
\cite{Ha1975}, while the second result of \eqref{eq2a} appears as
No. 4.4.2.11 in \cite{PBM2003}. This suggests our proof is entirely
different from these references. Moreover, we shall adapt the proof
to determine other results not given in these references.
\end{remark}

\begin{proof}
We begin by stating well-known trigonometric power sums, which
appear as No. 4.4.2.1 in \cite{PBM2003}. These are
\begin{equation} \label{eq3a}
\sum_{k=1}^{n} \cos^{2m} (kx) = 2^{1-2m}
\sum_{k=1}^{m}\binom{2m}{m-k}\frac{\sin (nkx)}{\sin (kx)} \;
\cos\bigl( (n+1)kx \bigr) +  2^{-2m} \,n \binom{2m}{m}
\end{equation}
and
\begin{equation}\label{eq3b}
\sum_{k=1}^{n} \sin^{2m} (kx) = 2^{1-2m}\sum_{k=1}^{m}(-1)^k
\binom{2m}{m-k}\frac{\sin (nkx)}{\sin (kx)}\; \cos \bigl( (n+1) kx
\bigr) +  2^{-2m}\, n \binom{2m}{m}\, .
\end{equation}

For $x=\pi /n$, \eqref{eq3a} becomes
\begin{equation}\label{eq4}
C(m,n)=2^{-2m}\, n \binom{2m}{m}+ 2^{1-2m} \sum_{k=1}^{m} (-1)^k
\,\binom{2m}{m-k}\, R(k) \cos(k \pi/n) \, ,
\end{equation}
where
\begin{equation*}\label{eq4a}
R(k) =\lim_{y \to \pi} \left\{ \frac{\sin(ky)}{\sin(ky/n)} \right\} = \lim_{\epsilon \to 0}
\left\{ \frac{\sin(k(\pi +\epsilon))}{\sin(k(\pi +\epsilon)/n)} \right\} \;.
\end{equation*}

The quotient of sines given above by $R(k)$ vanishes for all values
of $k$ except when $k$ is a multiple of $n$, i.e., when $k =  pn$,
where $p=1,2,\ldots,\lfloor m/n \rfloor$. For these values of $k$,
we find that $R(k) = (-1)^{(n-1)p} n$. The phase factor in $R(k)$
cancels $(-1)^k \cos(k\pi/n)$ in the summand of $C(m,n)$. Moreover,
since $ \binom{2m}{m}=2\binom{2m-1}{m}$, \eqref{eq4} becomes
\begin{equation*}\label{eq4b}
C(m,n)= 2^{1-2m}\,n \binom{2m-1}{m}+ 2^{1-2m}\,n \sum_{p=1}^{\lfloor
m/n \rfloor} \binom{2m}{m-pn} \;.
\end{equation*}
Re-arranging the terms on the rhs of the above result then yields
the first result in \eqref{eq2a}. For $m<n$, the sum over $p$
vanishes and we are left with the second result in \eqref{eq2a}. In
addition, for $m=0$, the second term not only vanishes, but also the
first term yields $n$. Finally, adopting the same approach to
\eqref{eq3b} yields both results in \eqref{eq2b}. This completes the
proof.
\end{proof}

From Theorem \ref{main}, we can obtain further results beginning with
the following corollary:
\begin{corollary}
For $q\equiv 0 \, ({\rm mod}\, n)$, where $n$ is a positive integer,
the following generalizations of the above basic trigonometric power
sums are given by
\begin{equation}\label{eq4c}
\sum_{k=0}^{q-1} \cos^{2m} \Bigl( \frac{k \pi}{n} \Bigr) =
\frac{q}{n}\,  \,C(m,n)\,,
\end{equation}
and
\begin{equation}\label{eq4d}
\sum_{k=0}^{q-1} \sin^{2m} \Bigl( \frac{k \pi}{n} \Bigr) =
\frac{q}{n} \, S(m,n)  \,,
\end{equation}
where $C(m,n)$ and $S(m,n)$ are obtained from \eqref{eq2a} and
\eqref{eq2b} respectively.
\end{corollary}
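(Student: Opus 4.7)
The plan is to exploit the periodicity of the summand $\cos^{2m}(k\pi/n)$ (resp.\ $\sin^{2m}(k\pi/n)$) as a function of the integer index $k$, and then partition the longer sum into a whole number of period-length blocks, each of which evaluates to $C(m,n)$ (resp.\ $S(m,n)$) by Theorem~\ref{main}.

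First I would establish that the map $k \mapsto \cos^{2m}(k\pi/n)$ is periodic with period $n$. This follows from the identity
$$
\cos\Bigl(\frac{(k+n)\pi}{n}\Bigr) = \cos\Bigl(\frac{k\pi}{n} + \pi\Bigr) = -\cos\Bigl(\frac{k\pi}{n}\Bigr),
$$
so that raising to the even power $2m$ eliminates the sign change. The identical argument, using $\sin(\theta + \pi) = -\sin\theta$, shows that $k \mapsto \sin^{2m}(k\pi/n)$ is also periodic of period $n$.

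Next, since $q \equiv 0 \pmod n$, write $q = jn$ with $j = q/n$ a positive integer. Splitting the range $\{0,1,\dots,q-1\}$ into $j$ consecutive blocks of length $n$, namely
$$
\{0,\dots,n-1\},\ \{n,\dots,2n-1\},\ \ldots,\ \{(j-1)n,\dots,jn-1\},
$$
and using the period-$n$ property established above, every block contributes
$$
\sum_{k=0}^{n-1} \cos^{2m}\Bigl(\frac{k\pi}{n}\Bigr) = C(m,n),
$$
so the total is $j\,C(m,n) = (q/n)\,C(m,n)$, which is \eqref{eq4c}. The derivation of \eqref{eq4d} is word-for-word identical with sine in place of cosine and $S(m,n)$ in place of $C(m,n)$.

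There is no substantive obstacle: the entire content is the periodicity observation and the block decomposition. The only thing one must be careful about is that the divisibility hypothesis $q \equiv 0 \pmod n$ is essential for the blocks to fit exactly; without it, a leftover partial block of length $q - n\lfloor q/n\rfloor$ would appear, and the identity would fail. Once that is noted, the closed forms on the right-hand sides follow by invoking Theorem~\ref{main} to replace $C(m,n)$ and $S(m,n)$ by the explicit expressions in \eqref{eq2a} and \eqref{eq2b}.
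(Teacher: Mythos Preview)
Your proposal is correct and takes essentially the same approach as the paper: both arguments write $q=sn$ and decompose the index range $\{0,\ldots,q-1\}$ into $s$ blocks of length $n$, using the period-$n$ behaviour of $\cos^{2m}(k\pi/n)$ to identify each block with $C(m,n)$. The only cosmetic difference is that the paper handles the indices $k=jn$ separately before shifting, whereas you state the periodicity lemma up front and treat the blocks uniformly; your version is, if anything, slightly cleaner.
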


\begin{proof}
From the condition on $q$, we let $q=sn$, where $s$ is a positive
integer. Then we note that the basic trigonometric power sum in
\eqref{eq4c} can be subdivided according to
\begin{equation}\label{eq4e}
\sum_{k=1}^{q-1} \cos^{2m} \Bigl( \frac{k \pi}{n} \Bigr) =
\sum_{j=0}^{s-1} \sum_{k=jn +1}^{(j+1)n-1} \cos^{2m} \Bigl(\frac{k
\pi}{n} \Bigr) + \sum_{j=1}^{s-1} \cos^{2m}(j \pi) \,,
\end{equation}
while \eqref{eq4d} can be expressed as
\begin{equation}\label{eq4f}
\sum_{k=0}^{q-1} \sin^{2m} \Bigl( \frac{k \pi}{n} \Bigr) =
\sum_{j=0}^{s-1} \sum_{k=jn +1}^{(j+1)n-1} \sin^{2m} \Bigl(\frac{k
\pi}{n} \Bigr) + \sum_{j=1}^{s-1} \sin^{2m}(j \pi) \,.
\end{equation}
The second sum on the rhs of \eqref{eq4e} represents a sum over
unity and hence, yields $s -1$, while the second sum on the rhs of
\eqref{eq4f} vanishes. In the first sum on the rhs of both equations
we replace $k$ by $k + nj$, where $k$ now ranges from unity to $n -
1$. Then \eqref{eq4e} and \eqref{eq4f} become
\begin{equation}\label{eq4g}
\sum_{k=0}^{q-1} \cos^{2m} \Bigl(\frac{k \pi}{n} \Bigr) =  s
\sum_{k=1}^{n-1} \cos^{2m} \Bigl(\frac{k \pi}{n} \Bigr) + s \,,
\end{equation}
and
\begin{equation}\label{eq4h}
\sum_{k=0}^{q-1} \sin^{2m} \Bigl( \frac{k \pi}{n} \Bigr) = s
\sum_{k=0}^{n-1} \sin^{2m} \Bigl(\frac{k \pi}{n} \Bigr) \,.
\end{equation}
From the definitions in Theorem \ref{main}, the sums on the rhs of
\eqref{eq4g} and \eqref{eq4h} are $C(m,n)$ and $S(m,n)$,
respectively. Moreover, by replacing $s$ by $q/n$, we arrive at the
results in the corollary, which completes the proof.
\end{proof}

\begin{corollary} \label{cor2}
If  we define the basic  power sums
$$
C(m,n,q) := \sum_{k=0}^{n-1} \cos^{2m}  \left(
\frac{qk\pi}{n}\right)\;,  \quad \mbox{and} \quad S(m,n,q):= \sum_{k=0}^{n-1}
\sin^{2m}  \left( \frac{qk\pi}{n}\right)\, ,
$$
where $n$ and $q$ are co-prime, then
\begin{equation}\label{eq5a}
C(m,n,q) = C(m,n) \, ,
\end{equation}
while
\begin{equation}\label{eq5b}
S(m,n,q)=  S(m,n)
\end{equation}
\end{corollary}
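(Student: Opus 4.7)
The plan is to exploit two facts: (i) that $\cos^{2m}$ and $\sin^{2m}$ are both $\pi$-periodic, since $\cos(x+\pi)=-\cos(x)$ and $\sin(x+\pi)=-\sin(x)$ while the exponent $2m$ is even; and (ii) that when $\gcd(q,n)=1$, multiplication by $q$ induces a bijection of $\mathbb{Z}/n\mathbb{Z}$ onto itself.

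First I would observe that $\cos^{2m}(qk\pi/n)$ depends only on the residue of $qk$ modulo $n$, not modulo $2n$, because for any integer $j$ one has $\cos^{2m}((qk+jn)\pi/n)=\cos^{2m}(qk\pi/n+j\pi)=\cos^{2m}(qk\pi/n)$. The identical reduction applies verbatim to $\sin^{2m}$. Consequently the summand in both $C(m,n,q)$ and $S(m,n,q)$ is a function solely of $qk \bmod n$.

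Next, since $\gcd(q,n)=1$, the map $k\mapsto qk \pmod n$ is a permutation of $\{0,1,\dots,n-1\}$. Therefore, as $k$ runs through a complete residue system modulo $n$, so does $qk$. Reindexing the sum via this bijection gives
\[
C(m,n,q)=\sum_{k=0}^{n-1}\cos^{2m}\!\left(\frac{qk\pi}{n}\right)=\sum_{j=0}^{n-1}\cos^{2m}\!\left(\frac{j\pi}{n}\right)=C(m,n),
\]
and the analogous reindexing yields $S(m,n,q)=S(m,n)$, which are the claimed identities \eqref{eq5a} and \eqref{eq5b}.

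There is essentially no obstacle here: the entire content is the periodicity reduction followed by a bijective change of summation variable. The only subtlety worth flagging explicitly is that the reduction is modulo $n$ rather than modulo $2n$, which is exactly why the coprimality condition $\gcd(q,n)=1$, rather than the stronger $\gcd(q,2n)=1$, suffices. No appeal to the closed-form expressions \eqref{eq2a} or \eqref{eq2b} is required, so the corollary in fact holds at the level of the trigonometric sums themselves, independent of Theorem \ref{main}.
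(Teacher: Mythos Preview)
Your proof is correct, and it is genuinely different from the paper's. The paper does not argue by bijection; instead it substitutes $x=q\pi/n$ into the Prudnikov identities \eqref{eq3a} and \eqref{eq3b}, obtains an expression involving the limit $R(kq)=\lim_{\epsilon\to 0}\sin(k(q\pi+\epsilon))/\sin(k(q\pi+\epsilon)/n)$, and then tracks which terms survive (namely $k=pn$) together with the various phase factors $(-1)^{qk}$, $(-1)^{pq(n+1)}$, etc., to show they collapse to exactly the same combinatorial sum as in the $q=1$ case. Your argument bypasses all of this machinery: the $\pi$-periodicity of $\cos^{2m}$ and $\sin^{2m}$ reduces the dependence to residues modulo $n$, and then coprimality of $q$ and $n$ makes $k\mapsto qk$ a permutation of $\mathbb{Z}/n\mathbb{Z}$, so the sums coincide term by term after reindexing. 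This is shorter, more conceptual, and---as you note---establishes the identity without any appeal to Theorem~\ref{main} or its closed forms. The paper's route has the minor virtue of staying within the computational framework already set up, but your observation that the result holds at the level of the raw trigonometric sums is the cleaner statement.
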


\begin{proof}
Returning to the proof of Theorem \ref{main}, we now introduce $x =
q \pi/n$, where $q$ is co-prime to $n$, into \eqref{eq3a}. (If $q$
is a negative integer, then we  take its absolute value in what
follows.) Hence \eqref{eq3a} becomes
\begin{equation}\label{eq5c}
C(m,n,q)=2^{-2m}\, n \binom{2m}{m}-1+ 2^{1-2m} \sum_{k=1}^{m} (-1)^{qk} \,\binom{2m}{m-k}\, R(kq) \cos(qk \pi/n)
\, ,
\end{equation}
where
\begin{equation*}\label{eq5d}
R(kq) = \lim_{\epsilon \to 0}
\left\{ \frac{\sin(k(q\pi +\epsilon))}{\sin(k(q \pi +\epsilon)/n)} \right\} \;.
\end{equation*}
That is, the argument of $R(k)$ has been replaced by $kq$, while the
cosine is now dependent upon $qk \pi/n$ instead of $k \pi/n$. This
means that the sum on the rhs of \eqref{eq5c} is, once again,
non-zero for all those integer values, where $k=pn$ with $p$ ranging
from unity to $\lfloor m/n \rfloor$, provided $n$ and $q$ are
co-prime. If $m < n$, the sum of the rhs of \eqref{eq5c} vanishes
and we are left with the second result in \eqref{eq2a}. Hence we
find that
\begin{equation} \label{eq5e}
(-1)^{npq} R(kq) \cos(qp \pi/n )= (-1)^{npq} (-1)^{pq(n+1)} \,n \,(-1)^{qp}=  n \,.
\end{equation}
Introducing the above result into \eqref{eq5c} yields the first result
in the corollary for $C(m,n,q)$.

To obtain \eqref{eq5b}, we put $x = q \pi/n$ in \eqref{eq3b}. Then
we arrive at
\begin{equation}\label{eq5f}
S(m,n,q)=2^{-2m}\, n \binom{2m}{m}+ 2^{1-2m} \sum_{k=1}^{m} (-1)^{k(q+1)}\binom{2m}{m-k}\,
R(kq) \cos(qk \pi/n) \, .
\end{equation}
As indicated above, the sum will only contribute when $R(kq)$ is non-zero, which occurs when $k$ is an integer multiple of $n$.
By multiplying \eqref{eq5e} throughout with the phase factor of $(-1)^{npq}$, we obtain the value of $R(kq) \cos(qk \pi/n)$, which equals
$(-1)^{qnp} n$. Introducing this value into \eqref{eq5f} yields \eqref{eq5b}. This completes the proof.
\end{proof}

Although it was stated that $n$ and $q$ need to be co-prime for
\eqref{eq5a} to hold, let us now assume that they are both even
numbers, but are co-prime once the factor of 2 has been removed. If
we let $q = 2s$ and $n = 2\ell$, then we have
\begin{equation}
C(m,n,q)= \sum_{k=0}^{n-1} \cos^{2m} \Bigl( \frac{sk \pi}{\ell} \Bigr)=2 \sum_{k=0}^{\ell-1}
\cos^{2m} \Bigl( \frac{sk \pi}{\ell} \Bigr).
\label{eq5g}\end{equation}
Since $s$ and $\ell$ are co-prime, we can apply Corollary\ 2.3. If, however, there was another factor
of 2 before $s$ and $\ell$ became co-prime, i.e.\ $n=4 \ell$ and $q=4s$, then we find that
\begin{equation}\label{eq5h}
C(m,n,q)= 4 \sum_{k=0}^{\ell-1} \cos^{2m} \Bigl( \frac{s k \pi}{\ell} \Bigr)\;.
\end{equation}
Moreover, if $r$ represents the product of all the common factors of $n$ and $q$, then we find that
\begin{equation}\label{eq5i}
\sum_{k=0}^{n-1}\left\{ \begin{matrix} \cos^{2 m} \\ \sin^{2m}  \end{matrix} \right\} \left(
\frac{qk \pi}{n} \right)= r \sum_{k=0}^{\ell-1} \left\{ \begin{matrix} \cos^{2 m} \\ \sin^{2m}
\end{matrix} \right\} \Bigl( \frac{s k \pi}{\ell} \Bigr),
\end{equation}
where the curly brackets have been introduced to signify that the above results apply to either a
cosine or sine power.Therefore, for $n = r \ell$ and $q = r s$, $C(m,n,q) = r
C(m,\ell,s) =r C(m,\ell)$  and $S(m,n,q)=r S(m,\ell,s)=rS(m,\ell)$ according to Corollary\
2.3.

\section{Extensions} \label{sec3}
In this section we shall use the results of the previous section to derive solutions for more advanced
basic trigonometric power sums. As stated in the introduction, Merca \cite{M2012} has evaluated several basic
trigonometric power sums by deriving \eqref{merca1} and \eqref{merca2} via the multisection series method. However,
these results can also be derived via Theorem \ref{main}. To demonstrate this, we express
$C(p,n)$ as
\begin{equation*}\label{eq6}
C(p,n)= 2\sum_{k=1}^{\lfloor(n-1)/2 \rfloor} \cos^{2p}(k \pi/n) + 1 \;,
\end{equation*}
where for the terms between $k = \lfloor (n-1)/2 \rfloor$ and $k = n
- 1$, we have replaced $k$ by $n-k$, thereby obtaining twice the
first sum via symmetry. In other words, the finite sum studied by
Merca in \cite{M2012} is just half of $C(p,n)$. Dividing through by
2 yields Merca's result when one realises that: (1) the sum over
negative values of $k$ in \eqref{merca1} is identical to that over
of positive values of $k$, and (2) the $k =  0$ term produces the
combinatorial term on the rhs of \eqref{eq2a}.

We can determine formulas for other basic trigonometric power sums by manipulating \eqref{eq2a} and \eqref{eq2b}. First we
express $C(m,n)$ as
$$
C(m,n) = \sum_{k=0}^{n-1} \cos^{2m}( 2k \pi/2n)=
\sum_{k=0,2,4,\ldots}^{2n-2} \cos^{2m}\left(k \pi/2n \right)\;.
$$
This result can be written alternatively as
\begin{equation}\label{eq6a}
C(m,n) = \frac{1}{2} \sum_{k=0}^{2n-1} \left( 1+ (-1)^k \right)
\cos^{2m} (k \pi/2n) \,.
\end{equation}
The first sum on the rhs is simply $C(m,2n)$. Therefore, \eqref{eq6a} becomes
\begin{equation}\label{eq7}
\sum_{k=0}^{2n-1} (-1)^k \cos^{2m}(k \pi/2n) = 2 C(m,n) - C(m,2n) \;.
\end{equation}
In fact, the above result can be extended to yield
\begin{equation*}\label{eq7a}
\sum_{k=0}^{2pn-1} (-1)^k \cos^{2m}(k \pi/2pn) = 2 C(m,pn) - C(m,2pn)
\;,
\end{equation*}
where $p$ is a positive integer. Hence the alternating form of $C(m,n)$ is given by
\begin{equation}\label{eq7b}
\sum_{k=0}^{n-1} (-1)^k \cos^{2m}(k \pi/n) = 2 C(m,n/2) - C(m,n)
\;,
\end{equation}
which is only valid for even values of $n$.

If we introduce \eqref{eq2a} into \eqref{eq7}, then we obtain three
distinct cases depending on whether $m<n$, $n \leqslant m <2n$ and
$m\geqslant 2n$. Hence we arrive at
\begin{equation}\label{eq8}
\sum_{k=0}^{2n-1} (-1)^k \cos^{2m} \Bigl( \frac{k \pi}{2n} \Bigr) =
\begin{cases} \displaystyle
2^{2-2m} \, n \Bigl( \sum_{p=1}^{\lfloor m/n
\rfloor}\binom{2m}{m-pn} -\sum_{p=1}^{\lfloor m/2n
\rfloor}\binom{2m}{m-2pn} \Bigr)\,, & m\geqslant 2 n \, ,  \cr
\displaystyle 2^{2-2m} \sum_{p=1}^{\lfloor
m/n\rfloor}\binom{2m}{m-pn}\,, & n \leqslant m<2n \,, \cr
\displaystyle 0 \, , & m<n\, .
\end{cases}
\end{equation}
In similar fashion, we can obtain the corresponding result when $\cos^{2m}(k \pi/2n)$ is replaced by $\sin^{2m}(k \pi/2n)$. Therefore,
repeating the above calculation, we obtain
\begin{equation*}\label{eq9}
\sum_{k=0}^{2n-1} (-1)^k \sin^{2m}(k \pi/2n) = 2 S(m,n) - S(m,2n)
\;,
\end{equation*}
which yields after the introduction of \eqref{eq2b}
\begin{equation}
\sum_{k=0}^{2n-1} (-1)^k \sin^{2m} \Bigl( \frac{k \pi}{2n} \Bigr) =
\begin{cases} \displaystyle
2^{2-2m} \, n \Bigl( \sum_{p=1}^{\lfloor m/n \rfloor} (-1)^{pn}
\binom{2m}{m-pn} -\sum_{p=1}^{\lfloor m/2n \rfloor}\binom{2m}{m-2pn}
\Bigr)\,, & m\geqslant 2 n \, ,  \cr \displaystyle 2^{2-2m}
\sum_{p=1}^{\lfloor m/n\rfloor}\binom{2m}{m-pn} \,, & n \leqslant
m<2n \,, \cr \displaystyle 0\, , & m<n\, . \label{eq10}
\end{cases}
\end{equation}

We can also express \eqref{eq7} as
\begin{equation}\label{eq10a}
\sum_{k=0,2,4,\ldots}^{2n-2} \cos^{2m}(k \pi/2n) -\sum_{k=1,3,5,\ldots}^{2n-1}
\cos^{2m}(k \pi/2n) = 2 C(m,n) - C(m,2n) \;.
\end{equation}
Alternatively, the above can be written as
\begin{equation*}\label{eq10b}
\sum_{k=0}^{n-1} \cos^{2m}(k \pi/n) -\sum_{k=0}^{n-1}
\cos^{2m}((k+ 1/2) \pi/n) = 2 C(m,n) - C(m,2n) \;.
\end{equation*}
The above result can be simplified further to yield
\begin{equation}\label{eq10c}
\sum_{k=0}^{n-1} \cos^{2m}((k+ 1/2) \pi/n) = C(m,2n) - C(m,n) \;.
\end{equation}
Before we can combine the terms on the rhs, we need to relate the upper
upper limit in the sum for $C(m,2n)$ in \eqref{eq10c}, viz.\ $\lfloor m/2n \rfloor$,
with that for $C(m,n)$, which is $\lfloor m/n \rfloor$. If we let $m= rn+b$, where $0<b<n$,
then $\lfloor m/n \rfloor= r$, while $\lfloor m/2n \rfloor= \lfloor r/2+b/2n\rfloor$.
If $r$ is even, then we find that $\lfloor m/n \rfloor= 2 \lfloor m/2n \rfloor$, but if
it is odd, then we find that $\lfloor m/n \rfloor = 2 \lfloor m/2n \rfloor +1$. In other
words, we require the following identity:
$$
\lfloor m/n \rfloor = 2 \lfloor m/2n \rfloor + \left(1- (-1)^{\lfloor m/n \rfloor} \right)/2 \;.
$$
We now introduce \eqref{eq2a} into \eqref{eq10c}, which yields
\begin{equation*}\label{eq10d}
\sum_{k=0}^{n-1} \cos^{2m}((k+ 1/2) \pi/n) =
2^{1-2m} \, n \biggl(\binom{2m-1}{m-1} +
2 \sum_{p=2,4, \ldots}^{2\lfloor m/2n\rfloor} \binom{2m}{m-pn} -
\sum_{p=1}^{\lfloor m/n\rfloor} \binom{2m}{m-pn} \biggr) \,.
\end{equation*}
At this stage we require the identity given above. Then we arrive at
\begin{equation}\label{eq10e}
\sum_{k=0}^{n-1} \cos^{2m} \bigl((k+ 1/2) \pi/n \bigr) =
2^{1-2m} \, n \biggl(\binom{2m-1}{m-1} +
\sum_{p=1}^{\lfloor m/n\rfloor} (-1)^p \binom{2m}{m-pn} \biggr) \,.
\end{equation}
This is basically twice Merca's result, which has been given here as \eqref{merca2}. By carrying
out a similar calculation with the cosine power in \eqref{eq7} replaced by a sine power and using
\eqref{eq2b} instead, one finds that
\begin{equation}\label{eq10f}
\sum_{k=0}^{n-1} \sin^{2m} \bigl( (k+ 1/2) \pi/n \bigr)  =
2^{1-2m} \, n \biggl(\binom{2m-1}{m-1} +
\sum_{p=1}^{\lfloor m/n\rfloor} \Bigl( 1 +(-1)^p - (-1)^{np} \Bigr) \binom{2m}{m-pn} \biggr) \,.
\end{equation}
For odd values of $n$, \eqref{eq10f} reduces to
\begin{equation*}\label{eq10fa}
\sum_{k=0}^{n-1} \sin^{2m} \bigl((k+ 1/2) \pi/n \bigr) =
2^{1-2m} \, n \biggl(\binom{2m-1}{m-1} +
\sum_{p=1}^{\lfloor m/n\rfloor} \binom{2m}{m-pn} \biggr) \,,
\end{equation*}
while for even values of $n$, it becomes
\begin{equation*}\label{eq10fb}
\sum_{k=0}^{n-1} \sin^{2m} \bigl( (k+ 1/2) \pi/n \bigr) =
2^{1-2m} \, n \biggl(\binom{2m-1}{m-1} +
\sum_{p=1}^{\lfloor m/n\rfloor} (-1)^p \binom{2m}{m-pn} \biggr) \,.
\end{equation*}

It was mentioned that Merca was able to evaluate finite sums involving
the binomial coefficient in a few corollaries by fixing $n$ to small
values ranging from unity to 5 or 6 in \eqref{merca1} and
\eqref{merca2} and directly evaluating the sums. These results can
be verified by carrying out the same procedure with the results in
Theorem \ref{main} and by using \eqref{eq10e}.

The preceding results, given by \eqref{eq8} and \eqref{eq10}, have
had a factor $\ell = 2$ introduced into the denominators of the
cosine and sine powers in the basic trigonometric power sums of
Theorem \ref{main}. We can develop other interesting results by
multiplying and dividing the argument in the trigonometric power by
integers. For example, if we multiply and divide the argument in the
cosine power by 3, then $C(m,n)$ can be expressed as
\begin{equation} \label{eq10g}
C(m,n) =\sum_{k=0,3,6,\ldots}^{3n-3} \cos^{2m} \Bigl( \frac{k \pi}{3
n} \Bigr) \;.
\end{equation}
The same applies to $S(m,n)$ when we multiply and divide the argument by 3.
To obtain a sum over all values of $k$ from 1 to $3n-1$, we need to write the
above sum as
\begin{equation}
C(m,n)= \frac{1}{3} \sum_{k=0}^{3n-1} \Bigl( 2 \cos \Bigl( \frac{2k
\pi}{3} \Bigr)+1 \Bigr) \cos^{2m} \Bigr( \frac{k \pi}{3 n} \Bigr)
\;.
\end{equation}
Consequently, we arrive at the following interesting result
\begin{equation}\label{eq11}
\sum_{k=0}^{3n-1} \cos \Bigl( \frac{2k \pi}{3} \Bigr) \cos^{2m}
\Bigr( \frac{k \pi}{3 n} \Bigr)
= \frac{1}{2} \Bigl(3\, C(m,n)
-C(m,3n) \Bigr) \,.
\end{equation}
The corresponding result for $S(m,n)$  is
\begin{equation} \label{eq12}
\sum_{k=0}^{3n-1} \cos \Bigl( \frac{2k \pi}{3} \Bigr) \sin^{2m}
\Bigr( \frac{k \pi}{3 n} \Bigr)= \frac{1}{2} \Bigl(3\, S(m,n)
-S(m,3n) \Bigr) \,.
\end{equation}
Next, by introducing the results in Theorem \ref{main} we obtain explicit
expressions for both sums, which are
\begin{equation*} \small
\sum_{k=0}^{3n-1} \cos \Bigl( \frac{2k \pi}{3} \Bigr) \cos^{2m}
\Bigr( \frac{k \pi}{3 n} \Bigr)=
\begin{cases}
\displaystyle \frac{3n}{2^{2m}} \Bigl( \sum_{p=1}^{\lfloor m/n\rfloor}\binom{2m}{m-pn}
-\sum_{p=1}^{\lfloor m/3n\rfloor}\binom{2m}{m-3pn} \Bigr) \,,& m \geqslant 3n\,, \cr
\displaystyle
\frac{3n}{2^{2m}}\sum_{p=1}^{\lfloor m/n\rfloor}\binom{2m}{m-pn} \,,& n \leqslant m< 3n\,, \cr
\displaystyle 0\;, & m<n\,,
\end{cases}
\end{equation*}
and
\begin{equation*} \small
\sum_{k=0}^{3n-1} \cos \Bigl( \frac{2k \pi}{3} \Bigr) \sin^{2m}
\Bigr( \frac{k \pi}{3 n} \Bigr)=
\begin{cases}
\displaystyle \frac{3n}{2^{2m}} \Bigl( \sum_{p=1}^{\lfloor m/n\rfloor} (-1)^{pn} \binom{2m}{m-pn}
-\sum_{p=1}^{\lfloor m/3n\rfloor} (-1)^{3pn}  & \cr
\displaystyle \times \;\; \binom{2m}{m-3pn} \Bigr) \,,& m \geqslant 3n\,, \cr
\displaystyle \frac{3n}{2^{2m}}\sum_{p=1}^{\lfloor m/n\rfloor} (-1)^{pn} \binom{2m}{m-pn} \,,& n \leqslant m< 3n\,, \cr
\displaystyle 0\;, & m<n\,.
\end{cases}
\end{equation*}
In the above results  we see that the final expressions for the sums are now
dependent on whether $m$ is greater or less than either $n$ or $3n$,
rather than $n$ and $2n$ in the previous example. Furthermore, the series on the
lhs can be written as
\begin{equation}
\sum_{k=0}^{3n-1} \cos \Bigl(\frac{2 k \pi}{3} \Bigr) \left\{
\begin{matrix} \cos^{2 m} \\ \sin^{2m}  \end{matrix} \right\} \left(
\frac{k \pi}{3n}\right) = \sum_{k=0}^{3n-1} (-1)^k \cos
\Bigl(\frac{k \pi}{3} \Bigr) \left\{ \begin{matrix} \cos^{2 m} \\
\sin^{2m}
\end{matrix} \right\} \left( \frac{k \pi}{3n}\right)\, .
\end{equation}
Nevertheless, we are unable to obtain the corresponding sum with $\cos (k \pi/3)$
in the summand instead of $\cos(2 k \pi/3)$. To accomplish that, we
need to examine the situation when we multiply and divide by
$\ell=4$ inside the trigonometric power.

For the $\ell=4$ situation the corresponding form of \eqref{eq10a}
becomes
\begin{equation*} \label{eq12a}
C(m,n) =\sum_{k=0,4,8,\ldots}^{4n-4} \cos^{2m} \Bigl( \frac{k \pi}{4
n} \Bigr) \;.
\end{equation*}
In this case the sum over all values of $k$ becomes
\begin{equation*}\label{eq13}
C(m,n)= \frac{1}{4} \sum_{k=1}^{4n-1} \Bigl( 2 \cos \Bigl( \frac{k \pi}{2} \Bigr)
+1 +(-1)^k \Bigr) \cos^{2m} \Bigl( \frac{k \pi}{4 n} \Bigr) \;.
\end{equation*}
The term involving unity in the above equation yields $C(m,4n)$,
while the term with the oscillating phase is simply \eqref{eq7}
with $n$ replaced by $2n$ or $2 C(m,2n)-C(m,4n)$. Therefore, we find
that the $C(m,4n)$ contributions cancel each other and we are left
with
\begin{equation*}\label{eq14}
\sum_{k=0}^{4n-1} \cos \Bigl( \frac{k \pi}{2} \Bigr) \cos^{2m}
\Bigl( \frac{k \pi}{4n} \Bigr)= 2 \,C(m,n) - C(m,2n) \,,
\end{equation*}
which is just another derivation of \eqref{eq7}. That is, we do not
obtain a new basic trigonometric power sum as we did when we divided
and multiplied by 3 inside the cosine power. In fact, the same
reducibility arises when we divide and multiply by 8. Therefore, we
conjecture that multiplying and dividing by $2^n$ in either $C(m,n)$
or $S(m,n)$ will not produce new formulas.

So let us now turn our attention to when we multiply and divide by 6 in
inside the cosine power of $C(m,n)$. Since we are dealing with an even number, we expect some
reducibility to occur. Then $C(m,n)$ becomes
\begin{equation*} \label{eq15}
C(m,n) =\sum_{k=0,6,12,\ldots}^{6n-6} \cos^{2m} \Bigl( \frac{k
\pi}{6 n} \Bigr) \;.
\end{equation*}
With the aid of the identity
\begin{equation} \label{eq15a}
2 \cos \Bigl( \frac{k \pi}{3} \Bigr) + 2 \cos \Bigl( \frac{2 k \pi}{3} \Bigr) +1
-(-1)^{k+1} = \begin{cases} 6\;,  &  k \equiv 0 \; ({\rm mod}\, 6)\, , \\
0\;,& {\rm otherwise}\,, \end{cases}
\end{equation}
the above equation can be written alternatively as
\begin{equation}\label{eq16}
C(m,n)= \frac{1}{6} \sum_{k=0}^{6n-1} \Bigl( 2 \cos \Bigl( \frac{k
\pi}{3} \Bigr) +2 \cos \Bigl( \frac{2k \pi}{3} \Bigr) + (-1)^k +1
\Bigr) \cos^{2m} \Bigr( \frac{k \pi}{6 n} \Bigr) \;.
\end{equation}
Expressing the sum of cosines as a product, we find that \eqref{eq16} becomes
\begin{equation*}\label{eq17}
C(m,n)= \frac{1}{6} \sum_{k=0,2,4,\ldots}^{6n-2} \Bigl( 4 \cos
\Bigl( \frac{k \pi}{2} \Bigr) \,\cos \Bigl( \frac{k \pi}{6} \Bigr) +
(-1)^k +1 \Bigr) \cos^{2m} \Bigr( \frac{k \pi}{6 n} \Bigr) \;.
\end{equation*}
Replacing $2k$ by $k$ in the above result yields
\begin{equation*}\label{eq18}
C(m,n)= \frac{1}{3} \sum_{k=1}^{3n-1} \Bigl( 2 (-1)^k \,
\cos \Bigl( \frac{k \pi}{3} \Bigr) +1 \Bigr)
\cos^{2m} \Bigr( \frac{k \pi}{3 n} \Bigr) \;.
\end{equation*}
Hence we arrive at
\begin{equation*}
\sum_{k=0}^{3n-1} (-1)^k  \,\cos \Bigl( \frac{k \pi}{3} \Bigr)\,
\cos^{2m} \Bigr( \frac{k \pi}{3 n} \Bigr) =\frac{1}{2} \, \Bigl( 3\,
C(m,n) -C(m,3n) + 3  \Bigr) \,.
\end{equation*}
This  is just \eqref{eq11} again except for the term of 3/2. Even though we have demonstrated the
reducible nature of basic trigonometric power sums, we have not been able to produce a result
with $\cos(k \pi/3)$ in the summand instead of $\cos(2 k \pi/3)$. However, it can be
seen that $\cos(k \pi/3)$ does appear in the identity given by \eqref{eq15a}. Therefore,
let us construct a situation involving the identity and the cosine power together, viz.\
\begin{equation}\label{eq19}
\frac{1}{3} \sum_{k=0}^{3n-1} \Biggl( \cos \Bigl( \frac{\pi k}{3}
\Bigr) + \cos \Bigl( \frac{2 \pi k}{3} \Bigr) +
\frac{1-(-1)^{k+1}}{2} \Biggr) \cos^{2m} \Bigl( \frac{ k \pi}{3 n}
\Bigr) = \sum_{k=0,6,12,\ldots}^{6 \lfloor(3n-1)/6 \rfloor}\cos^{2m}
\Bigl( \frac{k \pi}{3 n} \Bigr)\,.
\end{equation}
The first series on the lhs of the above equation is the result we wish to determine,
while the second series is given by \eqref{eq11}. The next term with $1/2$ is simply
$C(m,3n)/2$. Thus, we are left with two series to evaluate. The first of these can be
determined by replacing $n$ with $3n/2$ in \eqref{eq7}, which yields
\begin{equation} \label{eq20}
\sum_{k=0}^{3n-1}  (-1)^k \cos^{2m} \Bigl( \frac{k \pi}{3 n} \Bigr)=
2 C(m,3n/2) -C(m,3n) \,.
\end{equation}
Inserting the results for $C(m,n)$ in Theorem \ref{main} yields
\begin{equation} \label{eq20a} \small
\sum_{k=0}^{3n-1}  (-1)^k \cos^{2m} \Bigl( \frac{k \pi}{3 n} \Bigr)=
\begin{cases}
\displaystyle \frac{6n}{2^{2m}} \Biggl(  \sum_{p=1}^{\lfloor 2m/3n\rfloor}\binom{2m}{m-3pn/2}
-\sum_{p=1}^{\lfloor m/3n\rfloor}\binom{2m}{m-3pn} \Biggr) \,,& m \geqslant 3n\,, \cr
\displaystyle
\frac{6n}{2^{2m}}\sum_{p=1}^{\lfloor 2m/3n\rfloor}\binom{2m}{m-3pn/2} \,,& 3n/2 \leqslant m< 3n\,, \cr
\displaystyle 0\;, & m<3 n/2 \,.
\end{cases}
\end{equation}

For the above result to be valid, $n$ must also be even. Since $n$ is even,
$\lfloor (3n-1)/6  \rfloor = n/2-1$. Then the series on the rhs of \eqref{eq19} can
be expressed as
\begin{equation} \label{eq21}
\sum_{k=0,6,12,\ldots}^{6 \lfloor (3n-1)/6 \rfloor}  \cos^{2m}
\Bigl( \frac{k \pi}{3 n} \Bigr)= \sum_{k=0}^{n/2-1} \cos^{2m}
\Bigl(\frac{k \pi}{n/2} \Bigr)\,.
\end{equation}
In other words, the above sum is equal to $C(m,n/2)$ provided $n$ is even. If we
introduce \eqref{eq20} and \eqref{eq21} into \eqref{eq19} together with the other
previously mentioned results, then after a little algebra we find that
$$
\sum_{k=0}^{3n-1} \cos \Bigl( \frac{k \pi}{3} \Bigr)\, \cos^{2m}
\Bigr( \frac{k \pi}{3 n} \Bigr)= 3\, C(m,n/2) -3\, C(m,n)/2 +
C(m,3n)/2-C(m,3n/2) \, ,
$$
where $n$ is an even positive integer. Introducing the results from Theorem \ref{main}
into the above equation yields
\begin{equation*} \small
\sum_{k=0}^{3n-1} \cos \Bigl( \frac{k \pi}{3} \Bigr)\, \cos^{2m}
\Bigr( \frac{k \pi}{3 n} \Bigr)=
\begin{cases}
\displaystyle
\frac{3n}{2^{2m}} \Biggl( \sum_{p=1}^{\lfloor 2m/n\rfloor}\binom{2m}{m-pn/2}
- \sum_{p=1}^{\lfloor m/n\rfloor}\binom{2m}{m-pn}  & \cr
\displaystyle
- \sum_{p=1}^{\lfloor 2m/3n\rfloor} \binom{2m}{m-3pn/2} + \sum_{p=1}^{\lfloor m/3n \rfloor} \binom{2m}{m-3pn}\Biggr)
\,,& m \geq 3n\,, \cr
\displaystyle
\frac{3n}{2^{2m}} \Biggl( \sum_{p=1}^{\lfloor 2m/n\rfloor}\binom{2m}{m-pn/2}
- \sum_{p=1}^{\lfloor m/n\rfloor}\binom{2m}{m-pn}  & \cr
\displaystyle
- \sum_{p=1}^{\lfloor 2m/3n\rfloor} \binom{2m}{m-3pn/2}\Biggr) \,,& 3n/2 \leqslant m < 3n\,, \cr
\displaystyle
\frac{3n}{2^{2m}} \Biggl( \sum_{p=1}^{\lfloor 2m/n\rfloor}\binom{2m}{m-pn/2}
- \sum_{p=1}^{\lfloor m/n\rfloor}\binom{2m}{m-pn} \Biggr) \,,& n \leqslant m < 3n/2\,, \cr
\displaystyle
\frac{3n}{2^{2m}}\sum_{p=1}^{\lfloor 2m/n\rfloor}\binom{2m}{m-pn/2} \,,& n/2 \leqslant m< n\,, \cr
\displaystyle 0\;, & m< n/2\,.
\end{cases}
\end{equation*}

The sine version of the above basic trigonometric power sum can be obtained
in a similar manner with the various $C(m,n)$ terms replaced by their
corresponding $S(m,n)$ terms. In Appendix B we examine the case when the
argument of the cosine power is multiplied and divided by 5. In this
case two different prefactors of the cosine powers, viz., $\cos(2 \pi k/5)$
and $\cos(4 \pi k /5)$, arise, which are not easily decoupled from
one another. This implies that it is not possible without additional
information to obtain elegant results such as those above when we multiply
and divide by numbers that possess prime number factors greater than or equal to 5.

\section{Generating functions} \label{sec4}

In this section we use the results of Section \ref{sec2} to
determine several generating functions. We begin by defining the
exponential generating function
\begin{equation} \label{gf1}
G_1(n;z):=\sum_{m=0}^{\infty} \frac{z^m}{m!}\, C(m/2,n)\, .
\end{equation}
After some algebra we eventually arrive at
\begin{align}\label{gf2}
G_1(n;z)  & = \sum_{k=0}^{n-1} e^{z\cos(k\pi /n)}
=  \sum_{k=0}^{n-1} \cosh
\Bigl( z \cos \Bigl( \frac{k\pi}n \Bigr) \Bigr) + \sinh z
\nonumber \\
 & =  2n\sum_{k=0}^{\infty}\Bigl( \frac{z}{2} \Bigr)^{2k}
\sigma_k(n) +n I_0(z) + \sinh z \, ,
\end{align}
where
\begin{equation}\label{gf2a}
\sigma_k(n):= \frac 1{(2k)!}\sum_{p=1}^{\lfloor k/n \rfloor}\binom{2k}{k+pn}\, .
\end{equation}
and $I_0(z)$ represents the modified Bessel function of zeroth order. Note
that for $k < n$, $\sigma_k(n) =0$.

The result given by \eqref{gf2} can also be regarded as the generating function for
$\sigma_k(n)$. For fixed small values of $n$, the $\sigma_k(n)$ can
be determined by the direct evaluation of the series $C(m,n)$ in
Theorem \ref{main} or its variants in the corollaries. Moreover, it
was mentioned in the introduction that Merca has evaluated several
combinatorial identities involving the binomial coefficient in a
series of corollaries in  \cite{M2012}. In fact, Corollary 6 of this reference
presents specific values of the $\sigma_k(n)$ for $n$ ranging from
unity to six. These results have been determined via \eqref{merca1},
which we have seen follows from Theorem \ref{main}.

We can extend $G_1(n;z)$ to $G_1(n,q;z)$ by introducing the result for $C(m/2,n,q)$ as given
in Corollary\ \ref{cor2} into \eqref{gf1}. Then for odd values of $q$, the generating function
$G_1(n,q;z)$ becomes
\begin{align}\label{gf3}
G_1(n,q;z) & = \sum_{k=0}^{n-1} e^{z\cos(q k\pi /n)} =  \sum_{k=0}^{n-1} \cosh
\Bigl( z \cos \Bigl( \frac{qk\pi}n \Bigr) \Bigr)  +  \sinh z
\nonumber \\
& = 2n\sum_{k=0}^{\infty}\Bigl( \frac{z}{2} \Bigr)^{2k} \sigma_k(n) +n I_0(z) + \sinh z \,.
\end{align}
The intermediate member involving the summation over the hyperbolic cosine has been obtained by:
(1) expanding the exponential as a series in the first sum, (2) splitting the resultant sum into two equal
parts, (3) substituting $\cos^m(qk \pi/n)$ by $(-1)^{qm} \cos^m(\pi q(n-k)/n)$ in one of the parts and (4)
replacing $n-k$ by $k$. Moreover, the intermediate member does not apply for even values of $q$, although
the first and third members are still equal to one another.

We can also derive the generating function for the case when $C(m/2,n,q)$ is replaced by $S(m/2,n,q)$ in
the preceding analysis. That is, by defining the exponential generating function, $H_1(n,q:z)$, as
\begin{equation*} \label{gf4}
H_1(n,q;z):=\sum_{m=0}^{\infty} \frac{z^m}{m!}\, S(m/2,n,q)= \sum_{k=0}^{n-1} e^{z \sin(qk \pi/n)} \, ,
\end{equation*}
we can obtain a similar closed-form solution to \eqref{gf3}, provided $q$ is an even integer. In this instance
we introduce the result in Corollary \ref{cor2} and split the resulting sum. Then we replace $\sin^m(k\pi/n)$
by $\sin^m(\pi-k\pi/n)$ and proceed by combining the summations. Hence we
find that
\begin{align}\label{gf4a}
H_1(n,q;z) & =  n I_0(z) + 2n\sum_{k=0}^{\infty}\Bigl( \frac{z}{2} \Bigr)^{2k}
\sigma^{-}_k(n) \, ,
\end{align}
where $\sigma^{-}_k(n,q)$ has replaced $\sigma_k(n)$ and is defined as
\begin{equation*}\label{gf4b}
\sigma^{-}_k(n):= \frac{1}{(2k)!} \, \sum_{p=1}^{\lfloor k/n \rfloor} (-1)^{pn} \binom{2k}{k+pn} \,.
\end{equation*} \label{gf4c}
In obtaining \eqref{gf4a} we have split the basic sine power sum and replaced $\sin^{2m}(\pi k/n)$ in one
of the resulting sums by $\sin^{2m}(\pi +\pi(n-k)/n)$. If $n$ is even, then $\sigma^{-}_k(n)$ reduces to
$\sigma_k(n)$,  while if it is odd, then \eqref{gf4a} becomes
\begin{equation*}
\sigma_k(n,q):= \frac{1}{(2k)!} \, \sum_{p=1}^{\lfloor k/n \rfloor} (-1)^{p} \binom{2k}{k+pn} \,.
\end{equation*}
As mentioned in the introduction Merca \cite{M2012} obtains identities for the above result by fixing $n$ and
evaluating the series in \eqref{merca2} directly.

The above results are not the only examples, where the results of
Section \ref{sec2} can be used to obtain generating functions. For
example, when $|z|<1$, we can expand the denominator in the sum
$\sum_{k=1}^{n-1} 1/(1-z \cos^2(k \pi/n))$ and introduce
\eqref{eq2a}, thereby obtaining
\begin{align*}\label{gf5}
\frac{1}{n}\sum_{k=1}^{n-1}\frac 1{1-z\cos^2(k \pi/n)} = \sum_{k=0}^{\infty} \Bigl(\frac{z}{4} \Bigr)^k  \left(
\frac{(2k)!}{(k!)^2} +2 \sum_{p=1}^{\lfloor k/n \rfloor} \binom{2k}{k-pn} \right) - \frac{1}{n(1-z)}\, .
\end{align*}
The last term on the rhs, which arises from removing the $k=0$ term
in $C(m,n)$, can be incorporated as the $k=0$ term in the sum on the
lhs. In addition, by introducing the duplication formula for the
gamma function, viz., No. 8.335(1) in \cite{GR1994}, we find that
the first term on the rhs reduces to the binomial series for
$1/\sqrt{1-z}$, while we introduce the definition for
$\sigma_k(n)$ or \eqref{gf2a} into the second term on the rhs.
Consequently, we arrive at
$$
\frac{1}{n}\sum_{k=0}^{n-1}\frac 1{1-z\cos^2(k \pi/n)}=\frac{1}{\sqrt{1-z}}+2\sum_{k=0}^{\infty}\left(
\frac{z}{4} \right)^k (2k)!\,\sigma_k(n)\, .
$$
Similarly, one can consider the analogous sum where $\cos(k \pi/n)$ is replaced by $\sin(k \pi/n)$.
In this instance one employs \eqref{eq2b} in the analysis, which finally yields
$$
\frac{1}{n}\sum_{k=1}^{n-1}\frac 1{1-z\sin^2(k \pi/n)}=\frac{1}{\sqrt{1-z}}+2\sum_{k=0}^{\infty}\left(
\frac{z}{4} \right)^k (2k)!\,\sigma^{-}_k(n)\, .
$$

\section{Closed walks} \label{sec5}
Here we demonstrate that the main result of Section \ref{sec2} can
be used in calculating closed walks on a path and also in a cycle.
We begin by recalling that the adjacency matrix $A$ of a graph $G$
is the binary matrix with rows and columns indexed by the vertices
of $G$, such that the $(i,j)$-entry is equal to $1$ if $i$ and $j$
are adjacent, and zero otherwise. Since loops are not allowed in the
graphs under consideration, the diagonal entries of $A$ are all
zero. A walk of length $r$ on $G$ represents a sequence along $r+1$
adjacent vertices (not necessarily different) and hence, possesses
$r$ edges. A walk is said to be closed if the first and terminal
vertices or endpoints are the same. A circuit is known as a closed
walk when it has no repeating edges, while a closed walk with
repeating vertices is referred to as a cycle.

Evaluating the number of closed walks on a graph has been an active
topic of research that spans across combinatorics, graph theory, and linear
algebra (cf. \cite{CRS2010, HS1979, KN2013, S2013, TWKHM2013}). Although
our result for the number of closed walks will be general, when we turn to cycles,
we will need to restrict the closed walks to even length
and the cycles to odd order.

With the aid of \eqref{eq2a} we can now prove the following
theorem:
\begin{theorem} \label{thm51}
The number of closed walks of length $2m$ on a path $P_{n-1}$ is
given by
$$
 p(2m) =
\begin{cases} \displaystyle
2n\left(\binom{2m-1}{m-1}+\sum_{k=1}^{\lfloor
m/n\rfloor}\binom{2m}{m-kn}\right)-2^{2m} \, , & m\geqslant n\,, \cr
\displaystyle 2n \binom{2m-1}{m-1}-2^{2m}\;, & m<n\,,
\end{cases}
$$
\end{theorem}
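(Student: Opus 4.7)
The plan is to reduce the counting problem to a trace computation and then invoke Theorem~\ref{main} directly. Recall that for any finite graph $G$ with adjacency matrix $A$, the number of closed walks of length $r$ starting at vertex $i$ is the diagonal entry $(A^r)_{ii}$, so the total number of closed walks of length $r$ on $G$ equals $\operatorname{tr}(A^r)=\sum_{j} \lambda_j^r$, where $\lambda_j$ runs over the eigenvalues of $A$ counted with multiplicity. So the first step I would write is this reduction, specializing to $r=2m$.

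Next I would invoke the well-known spectrum of the path $P_{n-1}$ on $n-1$ vertices: the eigenvalues of its adjacency matrix are
\begin{equation*}
\lambda_k = 2\cos\!\left(\frac{k\pi}{n}\right), \qquad k=1,2,\dots,n-1.
\end{equation*}
Hence
\begin{equation*}
p(2m)=\operatorname{tr}(A^{2m})=\sum_{k=1}^{n-1}(2\cos(k\pi/n))^{2m}
=2^{2m}\sum_{k=1}^{n-1}\cos^{2m}(k\pi/n).
\end{equation*}

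The third step is to match this sum against $C(m,n)$ from Theorem~\ref{main}. Since $\cos^{2m}(0)=1$, the sum appearing in the definition of $C(m,n)$ starts from $k=0$ and contributes an extra $1$, giving
\begin{equation*}
\sum_{k=1}^{n-1}\cos^{2m}(k\pi/n)=C(m,n)-1.
\end{equation*}
Substituting this back, $p(2m)=2^{2m}C(m,n)-2^{2m}$. Inserting the two cases of \eqref{eq2a} from Theorem~\ref{main}, the prefactor $2^{2m}\cdot 2^{1-2m}=2$ combines with $n$ to produce $2n$, while the subtraction of $2^{2m}$ survives, yielding exactly the two cases in the statement.

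There is no real obstacle here; the content of the theorem lies entirely in Theorem~\ref{main}. The only things to get right are (i) the correct indexing convention for $P_{n-1}$ (vertices $n-1$, eigenvalues indexed $k=1,\dots,n-1$) and (ii) not forgetting to subtract the $k=0$ term when converting from $C(m,n)$ to the sum over $k=1,\dots,n-1$. Once those bookkeeping points are handled, the proof is a one-line substitution.
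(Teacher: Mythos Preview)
Your proposal is correct and follows essentially the same approach as the paper's own proof: both reduce the count of closed walks to $\operatorname{tr}(A^{2m})=\sum_{\ell}\lambda_\ell^{2m}$, invoke the explicit spectrum $\lambda_\ell=2\cos(\ell\pi/n)$ of $P_{n-1}$, and then apply \eqref{eq2a} from Theorem~\ref{main}. Your write-up is in fact slightly more explicit than the paper's about the bookkeeping that produces the $-2^{2m}$ term (removing the $k=0$ contribution from $C(m,n)$).
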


\begin{proof}
It is well-known that the $(i,j)$ entry of $A^k$ represents the
number of walks on $G$ of length $k$ with endpoints $i$ and $j$.
Furthermore, if $\lambda$ is an eigenvalue of $A$, then $\lambda^k$
is an eigenvalue of $A^k$. Hence the trace of $A^k$ is equal to the
sum of the $k$th powers of the eigenvalues of $A$, which, in turn,
equals the total number of closed walks of length $k$ on $G$, which
we denote here by $p(k)$ (cf. \cite[p.14]{CRS2010}).  Because the
adjacency matrix can be represented by a tridiagonal matrix  with
ones on the sub- and super-diagonals and zeros elsewhere, its
eigenvalues for a path $P_{n-1}$ with $n-1$ vertices are
$2\cos(\ell\pi/n)$, where $\ell=1,\ldots,n-1$, (cf., e.g.,
\cite{dF2006}).  The result in the theorem follows by summing over
all values of $\ell$ and then by applying \eqref{eq2a}. This
completes the proof.
\end{proof}

It is interesting to notice that with Theorem \ref{thm51} we get the
sequence A$198632$ in \cite{L2011}.

We now turn our attention to closed walks in a cycle by presenting the
following theorem.

\begin{theorem}
The number of closed walks of length $2m$ on the cycle $C_{n}$, where
$n$ is odd, is given by
$$
 p(2m) =
\begin{cases} \displaystyle
2n \Bigl(\binom{2m-1}{m-1} +\sum_{r=1}^{\lfloor m/n\rfloor}
\binom{2m}{m-rn} \Bigr)\, , & m\geqslant n\;, \cr \displaystyle 2 n
\binom{2m-1}{m-1}\;, & m<n\;.
\end{cases}
$$
\end{theorem}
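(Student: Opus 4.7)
The plan is to mirror the proof of Theorem \ref{thm51}, replacing the path $P_{n-1}$ with the cycle $C_n$ and using Corollary \ref{cor2} (the coprime reduction) at a crucial step. As before, the starting point is that the number of closed walks of length $2m$ on a graph $G$ equals $\mathrm{tr}(A^{2m})$, which in turn equals the sum of $(2m)$th powers of the eigenvalues of the adjacency matrix $A$.

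The next step is to identify the eigenvalues of the adjacency matrix of $C_n$. Since $A$ is a circulant matrix with ones on the two cyclic sub/super-diagonals, its eigenvalues are
$$
\lambda_j = 2\cos(2\pi j/n)\, , \qquad j=0,1,\ldots, n-1\, .
$$
Thus
$$
p(2m) = \sum_{j=0}^{n-1} \lambda_j^{2m} = 2^{2m} \sum_{j=0}^{n-1} \cos^{2m}\!\Bigl(\frac{2j \pi}{n}\Bigr) = 2^{2m}\, C(m,n,2)\, ,
$$
in the notation of Corollary \ref{cor2}. This is where the hypothesis that $n$ is odd enters decisively: because $n$ is odd we have $\gcd(2,n)=1$, so Corollary \ref{cor2} applies and gives $C(m,n,2) = C(m,n)$. (If $n$ were even, $q=2$ and $n$ would share a common factor and one would instead land in the regime of \eqref{eq5h}--\eqref{eq5i}, which produces a different-looking expression.)

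Finally, substituting the closed forms for $C(m,n)$ from Theorem \ref{main} gives
$$
p(2m) = 2^{2m}\cdot 2^{1-2m}\, n\Bigl(\binom{2m-1}{m-1} + \sum_{r=1}^{\lfloor m/n\rfloor} \binom{2m}{m-rn}\Bigr) = 2n\Bigl(\binom{2m-1}{m-1} + \sum_{r=1}^{\lfloor m/n\rfloor} \binom{2m}{m-rn}\Bigr)
$$
when $m \geqslant n$, and the single binomial term $2n\binom{2m-1}{m-1}$ when $m<n$, exactly as claimed. The only genuine subtlety is the coprimality check that justifies invoking Corollary \ref{cor2}; everything else is bookkeeping. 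Note also that no analogue of the $-2^{2m}$ correction appearing in Theorem \ref{thm51} arises here, because for the cycle the full sum $\sum_{j=0}^{n-1}\lambda_j^{2m}$ has no ``missing" eigenvalue to subtract — the index $j=0$ contributing $\lambda_0=2$ is part of the spectrum.
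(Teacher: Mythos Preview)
Your proof is correct and follows essentially the same approach as the paper: identify the cycle eigenvalues $2\cos(2\pi j/n)$, express $p(2m)=2^{2m}C(m,n,2)$, invoke Corollary~\ref{cor2} (which is exactly the paper's reference to \eqref{eq5a}) using $\gcd(2,n)=1$, and substitute Theorem~\ref{main}. Your added remarks on why $n$ odd is needed and why no $-2^{2m}$ correction appears are accurate and make explicit what the paper leaves implicit.
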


\begin{proof}
In this instance the eigenvalues of an $n$-cycle are equal to $2\cos(2\ell\pi/n)$,
for $\ell=0,1,\ldots,n-1$. Because of this, we can follow the previous proof except
that we use \eqref{eq5a} instead of \eqref{eq2a}. Consequently, we arrive
at the result in the theorem. This completes the proof.
\end{proof}

\section{Conclusion}
In this paper we have presented combinatorial forms for the two main
basic trigonometric power sums $C(m,n)$ and $S(m,n)$ in Theorem
\ref{main}. We have been able to extend these results to derive
combinatorial forms for other basic trigonometric power sums, where
either the arguments in the trigonometric power and/or their limits
have been altered. Where possible we have been able to relate our
results to existing solutions such as those appearing in
\cite{M2012}. In addition, we have demonstrated that our main
results can be applied to generating functions, but even more
interesting, is that they were used to determine the number of
closed walks on a path and in a cycle. In the future we intend to
apply the results presented here when we study the general or
twisted Dowker \cite{Do1992} and related sums \cite{Cv2012}.

\section{Acknowledgement}
This work was supported and funded by Kuwait University Research
Grant No. SM03/13. We thank the referee for alerting us to several
typos and suggesting improvements to the original manuscript.

\section{Appendix A}
In the introduction it was stated that Berndt and Yeap's result for
the sum over even powers of cotangent, viz., \eqref{eq1a}, was
imprecise and that a better formulation was given by \eqref{eq1b}.
Here, we prove this by referring to \cite{BY2002}. To enable the
reader to develop  an understanding of how polynomials in $k$ arise
when evaluating finite sums of powers of the cotangent, we shall
also show how the formula is implemented for specific values of $n$,
which is also lacking in \cite{BY2002}.

Broadly speaking, Berndt and Yeap derive \eqref{eq1b} via the third
case considered in \cite[Theorem 2.1]{BY2002}. The other two cases
will be discussed in a future work. The theorem deals with the
contour integration of the function
 $$f(z)=\cot^m(\pi z) \cot(\pi(hz-a))\cot(\pi(kz-b))$$ over a
positively oriented rectangle with vertices at $\pm i R$ and $1 \pm
iR$, where $R>\epsilon$, and possessing semi-circular indentations
at 0 and 1 of radius $\epsilon$, where $\epsilon< \min\{
(h-1+a)/h,(k-1+b)/k \}$. The third case is represented by $a=b=0$
and hence $f(z)$ becomes
\begin{equation}\label{ap1}
f(z) = (hk)^{-1} (\pi z)^{-m-2} \Bigl(\sum_{j=0}^{\infty} a(j) x^j
\Bigr)^m \sum_{\mu=0}^{\infty} a(\mu) (h' x)^{\mu}
\sum_{\nu=0}^{\infty} a(\nu) (k' x)^{\nu}   \;\;,
\end{equation}
where $a_j = (-1)^j 2^{2j} B_{2j}/(2j)!$, $x = (\pi z)^2$, $h' =
h^2$ and $k' = k^2$. From \eqref{ap1} we see that there is a pole of
order $m+2$ at  $z = 0$. Therefore, the aim is to
evaluate the residue of $f(z)$ at $z = 0$, which is given by
\begin{equation}\label{ap2}
{\rm Res} \,f(z) \Bigl{|}_{z=0}= \frac{1}{(m+1)!} \, \frac{d^{m+1}}{dz^{m+1}}\, \Bigl( z^{m+2} f(z) \Bigr) \Bigl{|}_{z=0}\;.
\end{equation}
Before we can evaluate this, we need to evaluate the product of the infinite
series on the rhs of \eqref{ap1}. Berndt and Yeap proceed by introducing coefficients
$C(j_1,\ldots,j_m,\mu,\nu)$, which are not given explicitly, but that they arise when
a sum over all $(m+2)$-tuples $(j_1,\ldots, j_m,\mu,\nu)$ is evaluated under the
condition that $2\left(\sum_{i=1}^m j_i +\mu+ \right. $ $\left.\nu \right)= m+1$.
This, however, leads to the imprecision in \eqref{eq1b}. Here we adopt a different
approach based on extending the Cauchy product formula \cite{Wei2015}.

We begin by multiplying one of the series in the power by the penultimate series in \eqref{ap1}.
If we denote this product as $P_1$, then by the Cauchy product formula, it can be expressed as
\begin{equation*} \label{ap3}
P_1= \sum_{j=0}^{\infty} x^j A_1(j) \;,
\end{equation*}
where $A_1(j) = \sum_{\ell_1=0}^{j} a(\ell_1) h^{'\,\ell_1}
a(j-\ell_1)$. Now we multiply $P_1$ by the final series in
\eqref{ap1}, which yields
\begin{equation*}\label{ap4}
P_2= P_1 \; \sum_{\nu=0}^{\infty} a(\nu) k^{'\,\nu} x^{\nu}=
\sum_{j=0}^{\infty} x^j \sum_{\ell_1=0}^{j}a(\ell_1) k^{'r\, \ell_1}
A_1(j-\ell_1) = \sum_{j=0}^{\infty} x^j A_2(j-\ell_1) \;,
\end{equation*}
and
\begin{equation*} \label{ap5}
A_2(j) = \sum_{\ell_1=0}^{j} \sum_{\ell_2=0}^{j-\ell_1} a(\ell_1) \,
a(\ell_2)\, k^{'\,\ell_1} h^{'\,\ell_2} \, a(j-\ell_1-\ell_2) \, .
\end{equation*}
Next we multiply $P_2$ by another series in the power to obtain $P_3$, obtaining
\begin{equation*}\label{ap6}
P_3= P_2 \sum_{j=0}^{\infty} a(j) x^{j}= \sum_{j=0}^{\infty} x^j
\sum_{\ell_1=0}^{j} a(\ell_1) A_2(j-\ell_1)=
\sum_{j=0}^{\infty} A_3(j) x^j\;\;,
\end{equation*}
where
\begin{equation*}\label{ap7}
A_3(j) =\sum_{\ell_1=0}^{j}\sum_{\ell_2=0}^{j-\ell_1}
\sum_{\ell_3=0}^{j-\ell_1-\ell_2} h^{'\,\ell_2}k^{'\, \ell_3}\,
a(\ell_1)\, a(\ell_2)\, a(\ell_3) \, a(j-\ell_1-\ell_2-\ell_3) \;.
\end{equation*}
Continuing this process until all the series have been multiplied out, we eventually arrive at
\begin{equation*}\label{ap8}
\Bigl(\sum_{j=0}^{\infty} a(j) x^j \Bigr)^m \sum_{\mu=0}^{\infty}
a(\mu) (h' x)^{\mu} \sum_{\nu=0}^{\infty} a(\nu) (k' x)^{\nu}  =
\sum_{j=0}^{\infty} A_{m+1}(j) \,x^j \;\;,
\end{equation*}
where the coefficients are given by
\begin{align}\label{ap9}
A_{m+1}(j) =& \sum_{\ell_1=0}^{j}\sum_{\ell_2=0}^{j-\ell_1}
\sum_{\ell_3=0}^{j-\ell_1-\ell_2} \cdots
\sum_{\ell_{m+1}=0}^{j-\ell_1-\ell_2-\cdots-\ell_{m}}
h^{'\,\ell_m}k^{'\,\ell_{m+1}}\,
\nonumber\\
& \times \;\; \prod_{i=1}^{m+1} a(\ell_i) \,
a(j-\ell_1-\ell_2-\dots -\ell_m-\ell_{m+1}) \;.
\end{align}

If we let $\ell_s=\sum_{i=1}^{m+1} \ell_{i}$ and replace the various
terms in \eqref{ap9} by their values in $f(z)$, then we find that
\begin{align}\label{ap10}
f(z) & = (\pi z)^{-m-2}\sum_{j=0}^{\infty} (-1)^j (2\pi z)^{2j}
\sum_{\ell_1,\ell_2,\ell_3,
\ldots,\ell_{m+1}=0}^{j,j-\ell_1,j-\ell_1-\ell_2,\ldots,j-\ell_s+\ell_{m+1}}
h^{'\,2\ell_m-1}\,k^{'\,2\ell_{m+1}-1}
\nonumber\\
& \times \;\;  \prod_{i=1}^{m+1} \frac{B_{2\ell_i}}{(2\ell_i)!} \,
\frac{B_{2(j-\ell_s)}}{(2(j-\ell_s))!}\;.
\end{align}
Hence there is a pole of order $m+2$ at  $z=0$. Introducing the
above result into \eqref{ap2}, we see that there is only a residue
when $m+1$ is equal to one of the even powers of $z$ inside the
summation over $j$. Therefore, $m$ must be odd for $f(z)$ to yield a
residue. Introducing \eqref{ap10} into \eqref{ap2}, with $m$
replaced by $2n-1$, where $n$ is a positive integer, yields
\begin{equation*}\label{ap11}
{\rm Res} \,f(z) \Bigl{|}_{z=0}= \frac{(-1)^n \,2^{2n} }{\pi}
\sum_{\ell_1=0,\ell_2=0, \ldots,\ell_{2n}=0}^{n,
n-\ell_1,\ldots,n-\ell_s+\ell_{2n}} h^{'\,2\ell_{2n-1}-1} k^{'\,2\ell_{2n}-1}
\prod_{i=1}^{2n} \frac{B_{2\ell_i}}{(2\ell_i)!} \,
\frac{B_{2n-2\ell_s}}{(2n-2\ell_s)!}\;,
\end{equation*}
where $\ell_s =\sum_{i=1}^{2n} \ell_i$.

To obtain a finite sum over powers of the cotangent, we need to
consider the entire contour around $f(z)$. This means that there are
simple poles at $z = (j+a)/h$ and $z = (r+b)/k$, where $j$ and $r$
are non-negative integers such that $0<j+a<h$ and $0<r+b<k$. Since
this is the third case, where $a=b=0$, these become $z=j/h$ and
$z=r/k$. Moreover, because $h=1$, we can disregard the poles at
$z=j$ , while $r$ ranges from 1 to $k - 1$. In addition, by noting
that $\lim_{y \to \infty} \cot(c(x\pm iy)+d)= \pm i$, for $c>0$ and
$d$ real, Berndt and Yeap are able to evaluate the contour integral
directly, whereby obtaining
\begin{equation*} \label{ap12}
\frac{1}{2\pi \, i}\int_C f(z) \, dz= \frac{(-1)^n}{\pi} \;.
\end{equation*}
By applying Cauchy's residue theorem, we finally arrive at \eqref{eq1b}.

To conclude this appendix, let us now discuss the implementation of
\eqref{eq1b} for $n = 2$. Then the $j_i$ range from $j_1$ to $j_4$.
For $n-j_s$ to be non-zero, we require  some of the $j_i$ to be
zero, whereas according to the Berndt-Yeap result given by
\eqref{eq1a}, they should be greater than zero. For $n =2$, $j_0$
can be equal to 0, 1, or 2. When $j_0=2$, all the other $j_i$'s must
vanish and the sum in \eqref{eq1b} contributes  the  value  $-(-1)^2
(2^4) k^3 B_4/4!$, which in turn equals $-k^3/45$ since $B_4 =
-1/30$. When $j_0=1$, either the remaining $j_i$ equal unity or
$2-j_s$ equals unity. Hence there are four possibilities, each
yielding the same contribution. The total contribution for $j_0 =1$
becomes $4 k ((-1) (2^2) B_2/2!)^2$ or $4k/9$ since $B_2 = 1/6$.
When $j_0=0$, we have two separate cases. In the first of these
cases either one of the remaining $j_i$ or $2-j_s$ equals 2. Since
there are four possibilities, we obtain a contribution of $4 \cdot
2^4 B_4/(4!\, k)$ for this case. For the second case one of the remaining
$j_i$ or $2-j_s$ is equal to unity and another one must also be
equal to unity. Since there are effectively four variables including
$2-j_s$, this means there are $\binom{4}{2}$ or 6 combinations. Therefore,
the contribution from the second case is $6 (-2^2 B_2/2!)^2/k$.
Combining the two cases yields the total contribution for $j_0=0$,
which is $(-4/45 +2/3)k^{-1}$. Thus, \eqref{eq1b} for $n=2$ gives
\begin{equation*}\label{ap13}
\frac{1}{k} \sum_{r=1}^{k-1} \cot^4 \Bigl( \frac{\pi r}{k} \Bigr)
=1- (-k^3/45 +4 k/9 +26/45 k) \;.
\end{equation*}
After a little algebra, one eventually obtains
\begin{equation*}\label{ap14}
\sum_{r=1}^{k-1} \cot^4 \Bigl( \frac{\pi r}{k} \Bigr) = \frac{1}{45}
\, (k-1) (k-2) (k^2 +3k-13)\;,
\end{equation*}
which appears as Corollary\ 2.6a in Berndt and Yeap \cite{BY2002}.
In a similar fashion one can calculate the results for $n = 3$ and
$n = 4$, the details of which are not presented here. After a
little algebra, one finds that
\begin{equation}\label{ap15}
\sum_{r=1}^{k-1} \cot^6 \Bigl( \frac{\pi r}{k} \Bigr) = \frac{1}{945} \, (k-1) (k-2) \bigr( 2k^4+6 k^3-28 k^2 -96 k
+251 \bigr) \;\;,
\end{equation}
and
\begin{eqnarray*}
\sum_{r=1}^{k-1} \cot^8 \Bigl( \frac{\pi r}{k} \Bigr) &= &
\frac{1}{14175} \, (k-1) (k-2) \bigl( 3k^6+9 k^5-59 k^4 \\
 & &- \;195 k^3+ 457 k^2 +1761 k -3551 \bigr) \;.
\end{eqnarray*}

By using a different method, Gessel has obtained \eqref{ap15},
which, aside from a phase factor, appears as $q_6(n)$ in
\cite{G1997}. It should also be mentioned that beyond $n = 4$, the
calculations become cumbersome due to the rapidly increasing number
of combinations when the $j_i$ are summed to $n$. For these values
of $n$, a computer program will be needed to evaluate \eqref{eq1b}.

\section{Appendix B}
In this appendix we consider multiplying and dividing the argument
in the trigonometric powers of the sums $C(m,n)$ and $S(m,n)$ by 5
or what is referred to as the $\ell =5$ case according to the
terminology of Section \ref{sec4}. In so doing, the material
presented here should enable the reader to consider other values of
$\ell$, although we shall see that higher values of $\ell$ are not
as tractable as the cases studied in Section \ref{sec3}.

To investigate the $\ell=5$ case, we require the following general identity:
\begin{equation}\label{b1}
\sum_{j=1}^{\ell} e^{2 \pi ij k/\ell} = \begin{cases}
\ell\,, & \quad k \equiv 0\;\; ({\rm mod}\; \ell)\, , \cr
0 \,, & \quad    \;\; {\rm otherwise}.
\end{cases}
\end{equation}
Multiplying and dividing the argument in the cosine power of
$C(m,n)$ as defined in Section \ref{sec4} by $5$, we obtain
$$
C(m,n) = \sum_{k=0,5,10,\ldots}^{5n-5} \cos^{2m} \Bigl( \frac{k \pi}{5 n} \Bigr)\,.
$$
Next we put $\ell = 5$ in \eqref{b1} and introduce it into the above
equation. After a little algebra, we arrive at
\begin{equation} \label{b2}
C(m,n)= \frac{1}{5}\sum_{k=0}^{5n-1} \Bigl( 2\, \cos \Bigl( \frac{2 \pi k}{5} \Bigr) + 2 \, \cos \Bigl( \frac{4 \pi k}{5} \Bigr) +1 \Bigr)
\cos^{2m} \Bigl( \frac{k \pi}{5 n} \Bigr) \,.
\end{equation}
The last sum on the rhs of \eqref{b2} is  $C(m,5n)$. Hence we are
left with two distinct sums. To isolate these sums, we need to
consider an even multiple of $5$, e.g., $\ell = 10$, since we
observed that the basic trigonometric sums in Section \ref{sec4}
turned out to be reducible when $\ell$ was even.

By multiplying and dividing the argument of the cosine power in
$C(m,n)$ by $10$, we find that
$$
C(m,n) = \sum_{k=0,10,20,\ldots}^{10n-10} \cos^{2m} \Bigl( \frac{k \pi}{10 n} \Bigr)\,.
$$
Now we introduce the $\ell= 10$ version of \eqref{b1} into the above
result, which after a little algebra yields
\begin{eqnarray*}
C(m,n)& = & \frac{1}{10} \sum_{k=0}^{10n-1} \Bigl( 2\, \cos \Bigl(
\frac{\pi k}{5} \Bigr) + 2\, \cos \Bigl( \frac{2 \pi k}{5} \Bigr) +
2\, \cos \Bigl( \frac{3 \pi k}{5} \Bigr)
\\
& & + \; 2 \, \cos \Bigl( \frac{4 \pi k}{5} \Bigr) +1 +(-1)^k \Bigr)
\cos^{2m} \Bigl( \frac{k \pi}{10 n} \Bigr) \,.
\end{eqnarray*}
The above result can be simplified by introducing the trigonometric
identity for the sum of two cosines, which is given as No. 1.314(3)
in \cite{GR1994}. In this instance we sum the first and fourth
cosines on the rhs and then the second and third cosines. Then we
obtain
\begin{eqnarray}
10\,  C(m,n)& = & \sum_{k=0}^{10n-1} \Bigl( 4\, \cos \Bigl(
\frac{\pi k}{2} \Bigr)  \cos \Bigl( \frac{3 \pi k}{10} \Bigr) +
 4\, \cos \Bigl( \frac{\pi k}{2} \Bigr) \cos \Bigl( \frac{\pi k}{10} \Bigr)
\nonumber\\
& & + \; 1 +(-1)^k \Bigr) \cos^{2m} \Bigl( \frac{k \pi}{10 n} \Bigr)
\,. \label{b4}
\end{eqnarray}
In \eqref{b4} all terms with odd values of $k$ vanish, so we can
replace $k$ by $2k$, which leads to
\begin{equation*} \label{b5}
10\,  C(m,n)-2\, C(m,5n) = \sum_{k=0}^{5n-1} (-1)^k \Bigl( 4\, \cos \Bigl( \frac{3 \pi k}{5} \Bigr) +
 4\, \cos \Bigl( \frac{\pi k}{5} \Bigr) \Bigr) \cos^{2m} \Bigl( \frac{k \pi}{5 n} \Bigr) \,.
\end{equation*}
Alternatively, the above result can be written as
\begin{equation} \label{b6}
10 \,C(m,n)-2\, C(m,5n) = \sum_{k=0}^{5n-1}  \Bigl( 4\, \cos \Bigl( \frac{2 \pi k}{5} \Bigr) +
 4\, \cos \Bigl( \frac{4 \pi k}{5} \Bigr) \Bigr) \cos^{2m} \Bigl( \frac{k \pi}{5 n} \Bigr) \,.
\end{equation}
This, however, is twice \eqref{b2}. Therefore, the $\ell = 10$ case
reduces to the $\ell = 5$ case, just as we observed in the $\ell =
3$ and $\ell = 6$ cases. Worse still, the two series involving
$\cos(2\pi k/5)$ and $\cos(4\pi k /5)$ cannot be decoupled. That is,
extra information is required before each of these series can be
evaluated separately. However, we can express \eqref{b2} as
\begin{equation} \label{b7}
C(m,n)= \frac{1}{5}\sum_{k=0}^{5n-1} \Bigl( 2\, \cos \Bigl( \frac{2 \pi k}{5} \Bigr) + 2 \, \cos \Bigl( \frac{6 \pi k}{5} \Bigr) +1 \Bigr)
\cos^{2m} \Bigl( \frac{k \pi}{5 n} \Bigr) \,.
\end{equation}
By applying the identity for the sum of two cosines, we finally arrive at the following basic cosine power sum:
\begin{equation}\label{b8}
\sum_{k=0}^{5n-1} \cos \Bigl( \frac{ 2\pi k}{5} \Bigr) \cos \Bigl( \frac{4 \pi k}{5} \Bigr) \cos^{2m} \Bigl( \frac{k \pi}{5 n} \Bigr)=
 \frac{1}{4} \, \left( 5 C(m,n) -C(m,5n) \right) \,.
\end{equation}
In actual fact the above result is not very surprising because the product of the cosines external to the cosine power is given by
\begin{equation*} \label{b9}
\cos \Bigl( \frac{ 2\pi k}{5} \Bigr) \cos \Bigl( \frac{4 \pi k}{5}
\Bigr) = \begin{cases} 1\,, & k \equiv 0\;,\;  ({\rm mod} \; 5)\;,
\cr -1/4 \,, & {\rm otherwise}.
\end{cases}
\end{equation*}

It is also interesting to note that we cannot use the alternating version of $C(m,n)$ to decouple the sums in \eqref{b7}. From \eqref{eq7b} we have
$$
\sum_{k=0}^{n-1} (-1)^k \cos^{2m} \Bigl( \frac{5k \pi}{5n} \Bigr)=
\sum_{k=0,5,\ldots}^{5n-5} \cos \Bigl( \frac{ \pi k}{5}\Bigr) \cos^{2m} \Bigl(\frac{k \pi}{5n} \Bigr)
=2C(m,n/2) - C(m,n)\,,
$$
where $n$ can only be even. By following \eqref{b2} we can express the above result as
$$
\frac{1}{5} \sum_{k=0}^{5n-1} \cos \Bigl( \frac{\pi k}{5}\Bigr)
\Bigl( 2 \cos \Bigl( \frac{2 \pi k}{5} \Bigr) + 2 \cos \Bigl( \frac{4 \pi k}{5} \Bigr)
+1 \Bigr) \cos^{2m} \Bigl(\frac{k \pi}{5n} \Bigr)  =2C(m,n/2) - C(m,n)\,.
$$
After a little algebra we arrive at
\begin{align*}
& \sum_{k=0}^{5n-1} \Bigl( 2 \cos \Bigl( \frac{3 \pi k}{5} \Bigr)  + 2 \cos \Bigl( \frac{ \pi k}{5} \Bigr)
\Bigr) \cos^{2m} \Bigl(\frac{k \pi}{5n} \Bigr) \\
& = \;\; 10C(m,n/2) - 2 C(m,5n/2) + C(m,5n)- 5 C(m,n) \,.
\end{align*}
Once again, we are unable to decouple the component sums. In fact, as one goes to higher primes, there will
be more component sums appearing in the final result, which makes the task of isolating them on their own
even more difficult to accomplish. Nevertheless, we can combine the cosines on the lhs, thereby obtaining
\begin{align*}\label{b11}
& \sum_{k=0}^{5n-1} \cos \Bigl( \frac{\pi k}{5} \Bigr) \cos \Bigl( \frac{ 2\pi k}{5} \Bigr) \cos^{2m} \Bigl(\frac{k \pi}{5n} \Bigr)
\nonumber\\
& = \;\;\frac{1}{4} \Bigl(  10 C(m,n/2) - 2 C(m,5n/2) + C(m,5n)- 5 C(m,n) \Bigr) \,.
\end{align*}
Comparing the above result with \eqref{b8}, we see that they are the alternating versions of one another.

We can, however, derive a result for the first sum on the rhs of \eqref{b6}, although it may not be regarded
as very elegant. First, we express the sum as
\begin{equation}\label{b12}
\sum_{k=0}^{5n-1} \cos \Bigl( \frac{2 k \pi}{5} \Bigr)  \cos^{2m} \Bigl(\frac{k \pi}{5n} \Bigr)= \sum_{k=0}^{5n-1}
 \cos \Bigl(\frac{2kn \pi}{5n} \Bigr) \cos^{2m} \Bigl(\frac{k \pi}{5n} \Bigr)\,.
\end{equation}
From \cite[No. I.1.10]{PBM2003}, we have
\begin{equation}\label{b13}
\cos \Bigl(\frac{2n k\pi}{5n} \Bigr) = 2^{2n-1} \cos^{2n} \Bigl(\frac{k \pi}{5n} \Bigr) + n \sum_{j=0}^{n-1}
\frac{(-1)^{j+1}}{j+1}\, \binom{2n-j-2}{j} 2^{2n-2j-2}\,\cos^{2n-2j-2} \Bigl(\frac{k \pi}{5n} \Bigr)\,.
\end{equation}
Introducing \eqref{b13} into \eqref{b12} yields
\begin{align*}
\sum_{k=0}^{5n-1} \cos \Bigl( \frac{2 k \pi}{5} \Bigr)  \cos^{2m}
\Bigl(\frac{k \pi}{5n} \Bigr) = & \sum_{k=0}^{5n-1} \Bigl( 2^{2n-1}
\cos^{2m+2n} \Bigl(\frac{k \pi}{5n} \Bigr)+ n \sum_{j=0}^{n-1}
\frac{(-1)^{j+1}}{j+1}\, 2^{2n-2j-2}
\nonumber\\
& \times \binom{2n-j-2}{j} \cos^{2m+2n-2j-2} \Bigl(\frac{k \pi}{5n}
\Bigr) \Bigr) \,.
\end{align*}
Recognizing that the sum over $k$ is the basic cosine power sum
defined in Section \ref{sec2}, we finally arrive at
\begin{align}\label{b16}
\sum_{k=0}^{5n-1} \cos \Bigl( \frac{2 k \pi}{5} \Bigr)  \cos^{2m}
\Bigl(\frac{k \pi}{5n} \Bigr) = & \; 2^{2n-1}\, C(m+n,5n) +n
\sum_{j=0}^{n-1}  \frac{(-1)^{j+1}}{j+1}\; 2^{2n-2j-2}
\nonumber\\
& \times \;\; \binom{2n-j-2}{j}\,  C(m+n-j-1,5n) \,.
\end{align}
The other sum involving $\cos(4 k \pi /5)$ instead of $\cos(2 k
\pi/5)$ can be directly obtained from \eqref{b6}. Although \eqref{b16} is
cumbersome, it does nevertheless demonstrate that the basic cosine power
sum given above is combinatorial in nature or rational as a consequence of Theorem
\ref{main}.


\end{document}